\renewcommand{\div}{\operatorname{div}}
\newcommand{\Tt}{{\mathbb{T}}}
\newcommand{\Pp}{\mathcal P}
 \newcommand{\Rr}{\mathbb R}
 \newcommand{\Zz}{\mathbb Z}
 \newcommand{\ep}{\epsilon}
\newcommand{\Ff}{\mathcal F}
\renewcommand{\div}{\operatorname{div}}
\newcommand{\bx}{{\bf x}}
\newcommand{\bv}{{\bf v}}
\newtheorem{teo}{Theorem}[section]
\newtheorem{Lemma}{Lemma}[section]
\newtheorem{Corollary}{Corollary}[section]
\newtheorem{Proposition}{Proposition}[section]
\newtheorem{Assumption}{A}
\begin{document}

\title{Time-dependent mean-field games with logarithmic nonlinearities}
\author{Diogo A. Gomes\footnote{King Abdullah University of Science and Technology (KAUST), CEMSE Division and
KAUST SRI, Center for Uncertainty Quantification in Computational Science and Engineering, Thuwal 23955-6900. Saudi Arabia. e-mail: diogo.gomes@kaust.edu.sa.}, 
Edgard Pimentel
\footnote{Universidade Federal do Cear\'a, Campus of Pici, Bloco 914, Fortaleza, Cear\'a 60.455-760, Brazil. e-mail: epimentel@mat.ufc.br.}
}

\date{\today} 

\maketitle

\begin{abstract}
In this paper, we prove the existence of classical solutions for time dependent mean-field games with a logarithmic nonlinearity and subquadratic Hamiltonians. Because the logarithm is unbounded from below, 
this nonlinearity poses substantial mathematical challenges that have not been addressed in the literature.
Our result is proven by recurring to a delicate argument, which combines Lipschitz regularity for the Hamilton-Jacobi equation with estimates for the nonlinearity in suitable Lebesgue spaces. Lipschitz estimates follow from an application of the nonlinear adjoint method. These are then combined with
 a priori bounds for solutions of the Fokker-Planck equation and a concavity argument for the nonlinearity.   
\end{abstract}

\thanks{
D. Gomes was partially supported by
KAUST baseline funds, KAUST SRI, Center for Uncertainty Quantification in Computational Science and Engineering.

E. Pimentel is financed by CNPq-Brazil.}

\section{Introduction}

In this paper, we discuss time dependent mean-field games with logarithmic nonlinearities.
More precisely, 
we study the following mean-field game (MFG) problem:
\begin{equation}\label{mfg}
\begin{cases}
-u_t+H(x,Du)=\Delta u+g[m]\\
m_t-\div(D_pH(x,Du)m)=\Delta m,
\end{cases}
\end{equation}
where $g[m]=\ln m$.  The previous system is endowed  
with initial-terminal boundary datum
\begin{equation}\label{itbc}
\begin{cases}
u(x,T)\,=\,u_T(x)\\
m(x,0)\,=\,m_0(x),
\end{cases}
\end{equation}
where the terminal time $T>0$ is fixed. For simplicity, we work in the periodic setting (i.e., the variable $x$ takes values in the $d$-dimensional flat torus $\Tt^d=\Rr^d\backslash \Zz^d$).
We consider Hamiltonians $H$ which satisfy subquadratic growth conditions, as well as a number of  additional hypotheses discussed in Section \ref{assumptions}.
A model Hamiltonian that satisfies these is
\begin{equation}
\label{eham}
H(x,p)=a(x) (1+|p|^2)^{\frac \gamma 2} +V(x), 
\end{equation}
with $a, V:\Tt^d\to \Rr$, $a, V\in C^\infty$, with $a>0$, and $1< \gamma <\frac{5}{4}$. As usual, we assume that $u_T$ and $m_0$ are smooth functions and that $m_0>0$.

As we explain in Section \ref{motivation}, 
the system \eqref{mfg}-\eqref{itbc} aims at modeling a strategic interaction problem
between a (very) large number of rational agents.
These optimize preferences that depend, among other things, on the density of the entire population. Here, we assume that this dependence is of logarithmic form (i.e., given by the nonlinearity $\ln(m(x,t))$).
This nonlinearity encodes a particular preference structure. More precisely, because $\ln(z)\to -\infty$ as $z\to 0$,  agents benefit from avoiding crowding and remaining in states where the density, $m$, is small.

Because the logarithm is unbounded from below, the existence of solutions was an open question until now. Indeed, for time dependent problems, earlier results in the literature rely on the fact that the nonlinearity has uniform lower bounds.
The results in \cite{gueant3} are an exception, where the author finds explicit 
Gaussian-Quadratic stationary solutions for 
\begin{equation}\label{mfgQ}
\begin{cases}
-u_t+\frac{|Du|^2}{2}=\Delta u+\ln m\\
m_t-\div(m Du)=\Delta m,
\end{cases}
\end{equation}
and studies their stability. Thanks to the specific logarithmic nonlinearity and the quadratic structure, the Hopf-Cole transformation is also a suitable tool for the study of \eqref{mfgQ}. This was done, in the stationary case, for instance, 
in \cite{GM} to prove directly the existence of solutions. The Hopf-Cole transformation was also used in 
\cite{MR2928382, MR2976439, MR2974160} to study time-dependent problems for nonlinearities $g$ bounded from below. In this paper, we do not use the Hopf-Cole transformation because we do not have a quadratic Hamiltonian. The advantage is that our results are more robust and can be easily modified for other nonlinearities with 
logarithmic-type behavior. 


The question of existence of solutions for mean-field game systems has been investigated by several authors since the seminal works of J-M. Lasry and P-L. Lions \cite{ll1,ll2,ll3} and M. Huang, P. Caines and R. Malham\'e \cite{Caines1, Caines2}. For an account of the recent developments in this direction see \cite{llg2}, \cite{cardaliaguet}, \cite{achdou2013finite}, or \cite{GS}, the lectures by P-L. Lions \cite{LCDF}, \cite{LIMA}, and the monograph by A. Bensousan, J. Frehse and P. Yam \cite{bensoussan}.

In previous works ,\eqref{mfg}-\eqref{itbc} has been considered under the hypothesis that the nonlinearity $g$ is bounded from below.  A typical choice is the power nonlinearity $g(z)=z^\alpha$, for some $\alpha>0$. See \cite{ll2}, \cite{ll3}, \cite{porretta}, \cite{porretta2}, \cite{GPM2}, \cite{GPM3}, \cite{GPim1}, \cite{GP2}, \cite{cgbt} and \cite{GPim1}.
Firstly, lower bounds for $g$ imply that the solutions of the Hamilton-Jacobi equation are bounded from below.
Then, because of the optimal control nature of the problem, estimates for solutions of the Hamilton-Jacobi equation in $L^\infty(\Tt^d\times[0,T])$ can be proved under various conditions. 
However, if $g$ fails to be bounded from below, those estimates are no longer valid. Moreover, the logarithmic structure of the nonlinearity produces further difficulties: the polynomial estimates for the solutions of the Fokker-Planck equation no longer lead to bounds for the nonlinearity $g$ in the appropriate Lebesgue spaces. In the stationary case, the logarithmic nonlinearity was investigated in \cite{GM}, \cite{GPM1}, and \cite{GPatVrt}. The stationary obstacle problem with power or logarithmic nonlinearity dependence was studied in
\cite{GPat}; the congestion problem was studied in \cite{GMit}.

To overcome the difficulties caused by the unboundedness of the logarithm,
we recur to the nonlinear adjoint method \cite{E3}.
This yields estimates for the solutions $u$ of the Hamilton-Jacobi equation in $L^\infty(\Tt^d\times[0,T])$ in terms of both norms of $g$ in Lebesgue spaces and the adjoint variable.
A further study of the regularity of the adjoint variable improves these estimates. Then, to obtain a bound for
$g$ in $L^\infty(0,T;L^p(\Tt^d))$, we explore the concavity properties of the logarithm combined with a priori bounds for
$\frac 1 m$, and use the Fokker-Planck equation. A further application of the adjoint method yields Lipschitz regularity for the solutions of the Hamilton-Jacobi equation in terms of norms of the nonlinearity $g$ in Lebesgue spaces. 

The main result of this paper is the following:
\begin{teo}\label{main}
Suppose that Assumptions A\ref{ham}-\ref{gamma} from Section \ref{assumptions} are satisfied. Then, there exists a classical solution $(u,m)$ for \eqref{mfg}, with $u$ and $m$ in $\mathcal{C}^\infty(\Tt^d\times[0,T])$ and $m\geq 0$, satisfying the initial-terminal boundary datum \eqref{itbc}.
\end{teo}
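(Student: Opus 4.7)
The plan is to construct a classical solution through a regularization scheme combined with uniform a priori estimates. For each $\ep>0$, I would replace the nonlinearity $g[m]=\ln m$ by $g_\ep[m]=\ln(m+\ep)$, which is smooth and bounded from below by $\ln\ep$. Existence of a smooth solution $(u_\ep,m_\ep)$ to the regularized system with $m_\ep>0$ then follows from the standard theory of time-dependent MFGs for subquadratic Hamiltonians with nonlinearities bounded below. The core of the argument is to establish bounds on $(u_\ep,m_\ep)$ uniform in $\ep$ that are strong enough to pass to the limit as $\ep\to 0^+$ and preserve smoothness.

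The estimates will come from a loop that follows the strategy outlined in the introduction, combining three ingredients. First, apply the nonlinear adjoint method of Evans to the Hamilton--Jacobi equation: introduce an adjoint density $\rho_\ep$ solving the appropriate linearized Fokker--Planck equation, and pair it with suitable quantities to obtain integral identities. These yield a bound on $\|u_\ep\|_{L^\infty}$ and, via a finer use of the same method, on $\|Du_\ep\|_{L^\infty}$, expressed in terms of Lebesgue norms of $g_\ep[m_\ep]$ and moments of $\rho_\ep$. Second, Lipschitz control of $Du_\ep$ regularizes the drift $D_pH(x,Du_\ep)$ and produces both upper and positive lower a priori bounds on $m_\ep$ through the Fokker--Planck equation. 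Third, the elementary concavity inequality
\begin{equation*}
|\ln z|\,\le\, z + \tfrac{1}{z} + C
\end{equation*}
converts $L^p$ control of $m_\ep$ and $1/m_\ep$ into $L^\infty(0,T;L^p(\Tt^d))$ bounds on $g_\ep[m_\ep]$, closing the loop.

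The delicate point, which I expect to be the main obstacle, is the $L^p$ bound on $1/m_\ep$: this is where the unboundedness of $\ln$ from below genuinely enters, since standard polynomial moment estimates for the Fokker--Planck equation control positive moments of $m_\ep$ but not negative ones. I would derive the equation satisfied by $w_\ep = 1/m_\ep$ (or equivalently $\log m_\ep$) from the Fokker--Planck equation and run energy estimates exploiting the Lipschitz control on $Du_\ep$ coming from the adjoint step. The subquadratic restriction $1<\ga<\tfrac{5}{4}$ should be the precise threshold at which the adjoint-based bounds on $Du_\ep$ are strong enough to feed back into these negative-moment estimates without degrading the Lebesgue exponents on $g_\ep[m_\ep]$ at each iteration. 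Once uniform a priori bounds are in hand, parabolic Schauder theory and a standard bootstrap yield uniform $C^{k,\af}$ bounds of arbitrary order; Arzel\`a--Ascoli then gives a subsequence converging in $C^\infty$ to a limit $(u,m)\in \mathcal{C}^\infty(\Tt^d\times[0,T])$ solving \eqref{mfg}--\eqref{itbc}, with $m\ge 0$ inherited from $m_\ep>0$ and the initial-terminal data preserved by uniform convergence.
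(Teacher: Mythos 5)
Your overall architecture matches the paper's: regularize with $g_\ep[m]=\ln(m+\ep)$, use the nonlinear adjoint method to bound $\|Du^\ep\|_{L^\infty}$ in terms of $\|g_\ep\|_{L^\infty(0,T;L^p)}$, bound $\|g_\ep\|_{L^\infty(0,T;L^p)}$ back in terms of $\|Du^\ep\|_{L^\infty}$ via the Fokker--Planck equation, close the loop using $4(\gamma-1)<1$, and bootstrap. However, there is a genuine gap at the single most delicate step, and it is exactly the one you flag as the main obstacle. Your proposed conversion $|\ln z|\le z+\tfrac1z+C$ requires an $L^p$ bound on $1/m^\ep$ that grows \emph{polynomially} in $\|Du^\ep\|_{L^\infty}$, but the energy estimates you propose for $w=1/m^\ep$ (or for $\int (m+\ep)^{-p}$) only yield Gronwall-type bounds of the form $\int \frac{1}{m+\ep}\,dx \le e^{CT\||D_pH|^2\|_{L^\infty}}$, i.e.\ \emph{exponential} in $\|Du^\ep\|_{L^\infty}^{2(\gamma-1)}$. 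Feeding an exponential bound into the adjoint estimate gives $\|Du\|_{L^\infty}\le C+Ce^{C\|Du\|_{L^\infty}^{2(\gamma-1)}}$, which cannot be closed by Young's inequality for any $\gamma>1$; the restriction $\gamma<\tfrac54$ is irrelevant to that failure. The paper's resolution is precisely to avoid ever bounding $1/m$ in $L^p$: it bounds only $\ln\bigl(\int\frac{1}{m+\ep}\,dx\bigr)$ (which \emph{is} polynomial in $\||D_pH|^2\|_{L^\infty}$, by Lemma \ref{lemma1}), and then uses Jensen's inequality for the concave function $\Psi(z)=(\ln z)^p$ (Lemmas \ref{lemma3}--\ref{lemma4}) to pass from $\int_{\{m+\ep\le A\}}\bigl(\ln\frac{1}{m+\ep}\bigr)^p\,dx$ directly to $\bigl[\ln\int\frac{1}{m+\ep}\,dx\bigr]^p$. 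The ``concavity argument'' is the concavity of $(\ln z)^p$, not the elementary comparison you wrote down, and replacing one by the other destroys the polynomial closure of the loop.

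Two smaller points. First, your claim that Lipschitz control of $Du^\ep$ yields a positive pointwise lower bound on $m^\ep$ inside the a priori loop is at best unproven by the means you describe: the paper obtains the lower bound on $m^\ep$ only \emph{after} the Lipschitz estimate is closed, via the Hopf--Cole variable $v^\ep=\ln(m^\ep+\ep)$, whose equation involves $\div(D_pH)$ and hence requires $W^{2,p}$ control of $u^\ep$, not merely $Du^\ep\in L^\infty$. Second, your inequality also requires $\|m^\ep\|_{L^p}$ for $p>1$; the paper sidesteps this by using $\ln(m+\ep)\le C_\delta(m+\ep)^\delta$ on $\{m+\ep>1\}$ together with $\int m=1$ (Lemma \ref{lemma2}). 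Both issues are repairable, but the exponential-versus-polynomial mismatch in the negative-moment estimate is not repairable within your scheme as stated.
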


The Assumptions of the previous Theorem are presented in the next Section, where we outline its proof and the structure of this paper.

\section{Mean-field games theory \textemdash\, main assumptions and the outline of the proof}\label{ma}

We open this Section with an introduction to the theory of mean-field games and present an heuristic derivation of the MFG system \eqref{mfg}-\eqref{itbc}.

\subsection{A brief introduction to mean-field games}
\label{motivation}

The mean-field game theory investigates strategic interaction models
 involving a large number of agents.
 These models encode statistical information about the players in a mean field.  Individual 
 agents base their own decisions and strategies on this mean field, and collectively, drive the mean field 
 evolution. 
In the literature, this mean field is commonly encoded in a probability measure, which gives
the distribution of the agents. Each of the agents, taking into account this probability measure, 
computes a value function and an optimal strategy. This measure then evolves in time 
by aggregating the actions of the agents in a Fokker-Plack equation. 
In the sequel, we put forward an informal derivation of the model  \eqref{mfg}-\eqref{itbc}. 

For $t<T$, 
consider a controlled diffusion:
\begin{equation}\label{dif}
\begin{cases}
d\bx_s\,=\,{\bf v}ds+\sqrt{2}dW_s,\qquad t\leq s\leq T\\
\bx_t\,=\,x, 
\end{cases}
\end{equation}
where $W_t$ is a $d$-dimensional Brownian motion on a filtered probability space $(\Omega, \Ff_t, \Pp)$, 
${\bf v}$ is a $\Rr^d$ valued $\Ff_t$-progressively measurable control, 
and $x\in\Tt^d$ is a given initial condition. Let $J(x,\bv)$ be defined by 
\begin{equation}\label{cfunc}
J(x,t;\bv)=\int_t^TL(\bx_s,\bv_s)+r(\bx, t)ds+u_T(\bx_T),
\end{equation}
where $L:\Tt^d\times \Rr^d$ is the Lagrangian and $u_T$ is a terminal cost. Finally,  $r:\Tt^d\times [0,T]\to\Rr$ is a given function that we will specify later. 

 It is well known that, under general conditions,
 the value function associated with \eqref{dif}-\eqref{cfunc}
 given by
 $u(x,t)=\inf_{\bv} J(x, t, \bv)$ 
is a (viscosity) solution of 
\begin{equation}\label{hj1}
\begin{cases}
-u_t+H(X,Du(x,t))=\Delta u+r(x,t),\;\;\;&(x,t)\in\Tt^d\times[0,T)\\
u(x,T)=u_T(x)\;\;\;&x\in\Tt^d,
\end{cases}
\end{equation}
where $H(x,p)$ is the Legendre transform of the Lagrangian $L$. Furthermore, the optimal control is given in feedback form by 
\begin{equation}\label{optimal}
{\bf v}^*\,=\,-D_pH(x,Du).
\end{equation}

Assume that every single agent in the population faces the stochastic optimal control problem described by \eqref{dif}-\eqref{cfunc} and uses the optimal strategy.
Then, the density of agents evolves according to the Fokker-Planck equation:
\begin{equation}\label{FP2}
\begin{cases}
m_t-\div(D_pHm)=\Delta m,\;\;\;&(x,t)\in\Tt^d\times(0,T]\\
m(x,0)=m_0(x)\,=\,x,\;\;\;&x\in\Tt^d.
\end{cases}
\end{equation}
At this stage, the functional $J$ (and consequently $u$) does not depend on the density of agents. However, 
we can choose the function $r(x,t)=g(m(x,t))$, so that \eqref{hj1}-\eqref{FP2} becomes \eqref{mfg}. The specific choice
of $g(z)=\ln z$, penalizes agents that are in regions of high density and makes it very desirable to be in low-density areas. 

Clearly, low-density areas can be problematic for the regularity of solutions of \eqref{mfg}. 
A possible mechanism for the regularity of \eqref{mfg}, is that the diffusion will prevent low-density areas from 
arising. This would be immediate if the drift $D_pH$ were a priori with enough regularity. However, the regularity of $D_pH$ 
depends on bounds for $\ln m$. We explore this cross-dependency to simultaneously prove bounds for
$\ln m$ and $D_pH$. 

\subsection{Main assumptions}\label{assumptions}

We begin by introducing standard assumptions on the Hamiltonian $H$.
The first group of Assumptions,  A\ref{ham}-\ref{dxh}, are satisfied by a large class of Hamiltonians such as \eqref{eham},
but also by many other examples arising in practice.
They are introduced here for convenience and generality of the proof, but they are not overly restrictive. 
In contrast, Assumption A\ref{gamma} is
essential in our proof and imposes a suitable 
bound on the growth of the Hamiltonian (sublinear growth).
This sublinear growth condition corresponds to
 superlinear growth of the Lagrangian. This makes large drifts in \eqref{dif} expensive, and 
means that diffusion effects dominate the dynamics of the agents.

\begin{Assumption}\label{ham}
The Hamiltonian $H:\Tt^d\times\Rr^d\to\Rr$, $d>2$ is smooth and
\begin{enumerate}
\item for fixed $x$, the map $p\mapsto H(x,p)$ is strictly convex;
\item satisfies the coercivity condition $$\lim_{|p|\to\infty}\frac{H(x,p)}{|p|}\,=+\infty,$$and, without loss of generality, we suppose that in addition $H\geq 0$.
\item $H$ satisfies the growth condition: $$0\,\leq\,H(x,p)\,\leq\,C+C|p|^\gamma,$$ for some $\gamma> 1$, and $C>0$.
\end{enumerate}
The nonlinearity $g$ in \eqref{mfg} is $$g[m](x,t)\,\doteq\,\ln\left[m\right](x,t).$$
In addition, we suppose that $u_T, m_0\in C^\infty(\Tt^d)$ with $m_0>0$.
\end{Assumption}

Note that further conditions will be placed on the parameter $\gamma$ in Assumption A\ref{gamma}.




%
In this paper, we consider $d>2$. Nevertheless, minor modifications of our arguments yield similar results for the case $d\leq 2$.

Assumptions A\ref{dphminush}-\ref{dxh} that follow,
impose natural growth conditions on the first derivatives of the Hamiltonian $H$.

\begin{Assumption}\label{dphminush}There exists a constant $C>0$ such that
$$D_pH(x,p)p-H(x,p)\geq CH(x,p)-C.$$
\end{Assumption}

\begin{Assumption}\label{dphsq}There exists a constant $C>0$ such that
$$|D_pH|^2\leq C+C|Du|^{2(\gamma-1)}.$$
\end{Assumption}

\begin{Assumption}\label{dxh}There exists a constant $C>0$ such that
$$\left|D_xH(x,p)\right|\leq CH(x,p)+C.$$
\end{Assumption}

Lastly, we impose a condition on the exponent $\gamma$

\begin{Assumption}\label{gamma}
The exponent $\gamma$ satisfies $$1<\gamma<\frac{5}{4}.$$
\end{Assumption}

\subsection{Outline of the proof}\label{outline}

We consider an approximate problem whose limit solves \eqref{mfg}. This is done by replacing the operator $g[m]=\ln[m]$ with $$g_\ep[m](x,t)\,\doteq\,\ln[\epsilon+m](x,t).$$ As a consequence, we study the following regularized problem:
\begin{equation}\label{smfg}
\begin{cases}
-u^\ep_t+H(x,Du^\ep)=\Delta u^\ep+g_\ep[m^\ep]\\
m^\ep_t-\div(D_pH(x,Du)m^\ep)=\Delta m^\ep. 
\end{cases}
\end{equation} 
For fixed $\epsilon>0$, 
because $g_\epsilon$ is bounded from below and with logarithmic growth, 
the existence of classical solutions for \eqref{smfg}-\eqref{itbc} follows from standard arguments along the same lines of those in \cite{GPM2}.

To prove Theorem \ref{main}, Lipschitz regularity for $u^\ep$ plays a critical role. Indeed, we begin by
considering estimates for $g_\ep$ in terms of $Du^\ep$, as stated in the following Proposition:

\begin{Proposition}\label{regfp}
Let $(u^\ep,m^\ep)$ be a solution of \eqref{smfg}-\eqref{itbc} and assume that A\ref{ham}-\ref{dxh} hold. Then, for every $p>1$,
$$\left\|g_\epsilon\right\|_{L^\infty(0,T;L^p(\Tt^d))}\leq C+C\|Du^\ep\|^{2(\gamma-1)}_{L^\infty(\Tt^d\times[0,T])}.$$
\end{Proposition}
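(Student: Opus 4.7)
The plan is to split $g_\epsilon=\ln(\epsilon+m^\ep)=g_\epsilon^+-g_\epsilon^-$ into positive and negative parts; the positive part is tame by concavity and mass conservation, while all the difficulty lies in the negative part, which can become large precisely where $m^\ep$ is small and must be controlled by testing the Fokker-Planck equation against a suitable power of $\epsilon+m^\ep$.

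For the positive part, the concavity inequality $\ln z\leq s^{-1}(z^s-1)$ (valid for $z>0$, $s>0$) applied with $s=1/p$, together with the conservation law $\int_{\Tt^d}m^\ep(\cdot,t)\,dx=\int_{\Tt^d}m_0\,dx$ (obtained by integrating the Fokker-Planck equation on $\Tt^d$), gives
\begin{equation*}
\int_{\Tt^d}(g_\epsilon^+)^p\,dx\leq p^p\int_{\{\epsilon+m^\ep\geq 1\}}(\epsilon+m^\ep)\,dx\leq C_p,
\end{equation*}
uniformly in $t$ and independently of $\|Du^\ep\|_{L^\infty}$.

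For the negative part, I test the Fokker-Planck equation against $-\alpha(\epsilon+m^\ep)^{-\alpha-1}$ for a parameter $\alpha\in(0,1]$ to be chosen. Integration by parts on $\Tt^d$ and completion of the square on the drift cross term yield
\begin{equation*}
\frac{d}{dt}\int_{\Tt^d}(\epsilon+m^\ep)^{-\alpha}\,dx\leq \frac{\alpha(\alpha+1)}{4}\|D_pH(\cdot,Du^\ep)\|^2_{L^\infty(\Tt^d\times[0,T])}\int_{\Tt^d}(\epsilon+m^\ep)^{-\alpha}\,dx.
\end{equation*}
Since $m_0>0$ is smooth, the initial integral $\int(\epsilon+m_0)^{-\alpha}dx$ is bounded uniformly in $\alpha\in(0,1]$ and $\epsilon>0$. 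Gronwall's inequality combined with Assumption A\ref{dphsq} (which gives $\|D_pH\|^2_\infty\leq C+C\|Du^\ep\|^{2(\gamma-1)}_\infty$) yields an exponential bound. The decisive choice is
\begin{equation*}
\alpha:=\bigl[1+T\|D_pH(\cdot,Du^\ep)\|^2_{L^\infty}\bigr]^{-1}\in(0,1],
\end{equation*}
so that the Gronwall exponent becomes a universal constant; this gives $\int(\epsilon+m^\ep)^{-\alpha}\,dx\leq C$ uniformly in $t\in[0,T]$ and $\epsilon>0$. The elementary inequality $e^y\geq y^p/p!$ for $y\geq 0$ then produces the pointwise bound $(\epsilon+m^\ep)^{-\alpha}\geq(\alpha g_\epsilon^-)^p/p!$ (true trivially where $g_\epsilon\geq 0$ and by direct exponentiation where $g_\epsilon<0$), so that integration gives
\begin{equation*}
\|g_\epsilon^-(\cdot,t)\|_{L^p(\Tt^d)}^p\leq \frac{p!}{\alpha^p}\int(\epsilon+m^\ep)^{-\alpha}\,dx\leq Cp!\,\alpha^{-p}.
\end{equation*}
Taking the $p$-th root and using $\alpha^{-1}\leq C(1+\|Du^\ep\|^{2(\gamma-1)}_\infty)$ finishes the negative-part estimate, and combining with the positive part yields the Proposition.

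The main obstacle is that direct energy estimates on the Fokker-Planck equation only deliver a bound for $\int(\epsilon+m^\ep)^{-\alpha}\,dx$ that depends \emph{exponentially} on $\|D_pH\|^2_\infty$, which would be far weaker than the \emph{linear} growth in $\|Du^\ep\|^{2(\gamma-1)}_\infty$ required by the statement. The trick that converts the one into the other is the delicate balance between the two $\alpha$-dependent factors: choosing $\alpha$ inversely proportional to $\|D_pH\|^2_\infty$ tames the Gronwall exponential into an absolute constant, while the factor $\alpha^{-p}$ paid when extracting the $L^p$-norm from the exponential integrability reintroduces exactly the polynomial (indeed linear) dependence on $\|Du^\ep\|^{2(\gamma-1)}_\infty$ required by the claim.
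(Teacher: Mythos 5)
Your proof is correct, and while it shares the paper's overall skeleton, it implements the decisive step by a genuinely different mechanism. Like the paper, you split $\ln(\epsilon+m^\ep)$ into positive and negative parts, dispose of the positive part by concavity of the logarithm and conservation of mass, and control the negative part through an energy estimate obtained by testing the Fokker--Planck equation against a negative power of $\epsilon+m^\ep$; your differential inequality for $\int(\epsilon+m^\ep)^{-\alpha}\,dx$ is exactly the paper's Lemma 5.1 in the case $\alpha=1$. The divergence is in how the exponential Gronwall bound is converted into the required \emph{linear} dependence on $\|Du^\ep\|^{2(\gamma-1)}_{L^\infty}$. The paper keeps $\alpha=1$, accepts the bound $\int\frac{1}{m+\epsilon}\,dx\leq e^{C+CT\|D_pH\|^2_\infty}$, and then applies Jensen's inequality with a globally concave extension of $z\mapsto(\ln z)^p$ (Lemmas 5.3--5.4) so that only $\bigl[\ln\int\frac{1}{m+\epsilon}\bigr]^p$ appears, which is polynomial in $\|D_pH\|^2_\infty$. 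You instead tune the exponent, taking $\alpha\sim(1+T\|D_pH\|^2_\infty)^{-1}$ so that the Gronwall factor is an absolute constant, and then pay the factor $\alpha^{-p}$ when passing from the exponential integrability $e^{\alpha g_\epsilon^-}\in L^1$ to the $L^p$ bound via $y^p\leq C_p e^y$. Both routes are sound; yours trades the paper's concavity/Jensen argument (and the slightly delicate case analysis and concave extension $\Psi$ in Lemma 5.4) for a one-line parameter optimization, at the cost of needing the energy estimate for a whole family of exponents $\alpha\in(0,1]$ with constants uniform in $\alpha$ --- which you correctly secure from $m_0$ being bounded below. The only cosmetic point is that $p!$ should be read as $\Gamma(p+1)$ (or one should pass to $\lceil p\rceil$) for non-integer $p$.
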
The proof of Proposition \ref{regfp} is presented in Section \ref{rfk}. Then the nonlinear adjoint method (see \cite{E3}, as well as \cite{T1}) yields the following estimate:
\begin{Proposition}\label{prop}
Let $(u^\ep,m^\ep)$ be a solution of \eqref{smfg}-\eqref{itbc} and assume that A\ref{ham}-\ref{gamma} are satisfied. Then,
\begin{align*}
\left\|Du^\ep\right\|_{L^\infty(\Tt^d\times[0,T])}&\leq C+C\|g_\ep\|_{L^\infty(0,T;L^p(\Tt^d))}\\&\quad+C\|g_\ep\|_{L^\infty(0,T;L^p(\Tt^d))}\|Du^\ep\|^{2(\gamma-1)}_{L^\infty(\Tt^d\times[0,T])},
\end{align*}for some $p>1$.
\end{Proposition}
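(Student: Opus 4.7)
The plan is to adapt the nonlinear adjoint method of Evans to the regularized Hamilton-Jacobi equation in \eqref{smfg}. Fix a unit direction $\xi\in\Rr^d$ and a point $(x_0,t_0)\in\Tt^d\times[0,T)$. Differentiating the HJ equation along $\xi$, the quantity $w:=\xi\cdot Du^\ep$ satisfies the linear backward equation
\begin{equation*}
-w_t-\Delta w+D_pH(x,Du^\ep)\cdot Dw = -\xi\cdot D_xH(x,Du^\ep)+\xi\cdot Dg_\ep[m^\ep].
\end{equation*}
In parallel, I introduce the adjoint $\rho^\de$, defined as the forward-in-time solution on $\Tt^d\times(t_0,T]$ of
\begin{equation*}
\rho^\de_t-\Delta\rho^\de-\div\bigl(D_pH(x,Du^\ep)\,\rho^\de\bigr)=0,\qquad \rho^\de(t_0,\cdot)=\eta^\de_{x_0},
\end{equation*}
where $\eta^\de_{x_0}$ is a smooth mollification of $\delta_{x_0}$. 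This is a Fokker-Planck-type equation of the same structure as the $m^\ep$-equation, so $\rho^\de\geq 0$ and $\int_{\Tt^d}\rho^\de(\cdot,t)\,dx=1$ for every $t\in[t_0,T]$.

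Multiplying the equation for $w$ by $\rho^\de$ and integrating by parts in time and space (with a further IBP in $\xi$ transferring the spatial derivative off $g_\ep$ and onto $\rho^\de$) produces, after sending $\de\to 0$, the representation formula
\begin{align*}
w(x_0,t_0) &= \int_{\Tt^d}\xi\cdot Du_T(x)\,\rho^\de(x,T)\,dx \\
&\quad -\int_{t_0}^T\!\int_{\Tt^d}\xi\cdot D_xH\,\rho^\de\,dx\,dt - \int_{t_0}^T\!\int_{\Tt^d}g_\ep\,\xi\cdot D\rho^\de\,dx\,dt.
\end{align*}
I then estimate the three terms. The boundary contribution is controlled by $\|Du_T\|_\infty$, a constant. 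The $D_xH$-term, using A\ref{dxh} and A\ref{ham}(3), reduces to controlling $\int\!\!\int H(x,Du^\ep)\rho^\de\,dx\,dt$; applying the duality identity to the HJ equation itself and invoking A\ref{dphminush}, combined with an analogous adjoint-based bound of the form $\|u^\ep\|_\infty\leq C+C\|g_\ep\|_{L^\infty(0,T;L^p)}$, this contributes at most $C+C\|g_\ep\|_{L^\infty(0,T;L^p(\Tt^d))}$. The $g_\ep$-term is controlled by Hölder's inequality,
\begin{equation*}
\left|\int_{t_0}^T\!\int_{\Tt^d}g_\ep\,\xi\cdot D\rho^\de\,dx\,dt\right|\leq\|g_\ep\|_{L^\infty(0,T;L^p(\Tt^d))}\,\int_{t_0}^T\|D\rho^\de(\cdot,t)\|_{L^{p'}(\Tt^d)}\,dt,
\end{equation*}
with $p'=p/(p-1)$.

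The core of the argument is then a uniform-in-$\de$ bound $\int_{t_0}^T\|D\rho^\de(\cdot,t)\|_{L^{p'}}\,dt\leq C(1+\|Du^\ep\|_\infty^{2(\gamma-1)})$. This is obtained by multiplying the $\rho^\de$-equation by $(\rho^\de)^{r-1}$, integrating, and exploiting A\ref{dphsq} (which yields $|D_pH|^2\leq C+C|Du^\ep|^{2(\gamma-1)}$), producing parabolic energy estimates of the form
\begin{equation*}
\sup_{t\in[t_0,T]}\|\rho^\de(t)\|_{L^r(\Tt^d)}^r + \|D(\rho^\de)^{r/2}\|_{L^2(\Tt^d\times[t_0,T])}^2 \leq C\bigl(1+\|Du^\ep\|_\infty^{2(\gamma-1)}\bigr),
\end{equation*}
followed by a Gagliardo-Nirenberg interpolation to convert this into the desired $L^1_tL^{p'}_x$ control on $D\rho^\de$. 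Inserting this into the Hölder bound and taking the supremum over $(x_0,t_0,\xi)$ produces the stated inequality.

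The main obstacle is precisely this uniform-in-$\de$ adjoint regularity: since $\eta^\de_{x_0}$ approximates a Dirac mass, $\|\rho^\de(t_0)\|_{L^r}$ blows up as $\de\to 0$ for any $r>1$. This is circumvented by exploiting the instantaneous parabolic smoothing of the adjoint equation (equivalently, evaluating the representation formula at a time slightly after $t_0$ and using short-time heat-kernel estimates), which removes the $\de$-dependence after an arbitrarily short time. A secondary, more structural, difficulty is selecting $r$ and $p'$ so that Gagliardo-Nirenberg closes the estimate with the correct exponent $2(\gamma-1)$ on $\|Du^\ep\|_\infty$; this is where the subquadratic growth from A\ref{ham}(3) and, for the subsequent closure of the iteration in Theorem \ref{main}, the upper bound in A\ref{gamma}, enter critically.
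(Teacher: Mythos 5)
Your overall architecture coincides with the paper's: differentiate the Hamilton--Jacobi equation in a direction $\xi$, pair with the adjoint variable of \eqref{adj}, and split the resulting representation formula into the terminal term, the $D_xH$ term (handled via A\ref{dxh} and the duality identity, i.e.\ Corollary \ref{cor2}), and the $g_\ep$ term (integrated by parts onto $\rho$ and estimated by H\"older). The gap is in the step you yourself call the ``core'': the uniform bound $\int_{t_0}^T\|D\rho(\cdot,t)\|_{L^{p'}}\,dt\leq C\bigl(1+\|Du^\ep\|_\infty^{2(\gamma-1)}\bigr)$. Your proposed energy estimate $\sup_t\|\rho^\de(t)\|_{L^r}^r+\|D((\rho^\de)^{r/2})\|_{L^2}^2\leq C\bigl(1+\|Du^\ep\|_\infty^{2(\gamma-1)}\bigr)$ with $r\geq 1$ cannot hold uniformly in $\de$: the left-hand side at $t=t_0$ equals $\|\eta^\de_{x_0}\|_{L^r}^r\to\infty$, and the Gronwall constants produced by multiplying the adjoint equation by $(\rho^\de)^{r-1}$, $r>1$, carry factors of the type $e^{CT\|D_pH\|_\infty^2}$, which depend on $\|Du^\ep\|_\infty$ exponentially and destroy the clean power $2(\gamma-1)$ needed later to close Theorem \ref{uLip}. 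Appealing to ``instantaneous parabolic smoothing'' does not rescue this: short-time Aronson-type bounds for the drift-diffusion equation again have constants depending on $\|D_pH\|_\infty$, and the time integral near $t_0$ is precisely where $\rho$ is concentrated. (A smaller imprecision: the $D_xH$ term actually contributes $C+C\|g_\ep\|(1+\|Du^\ep\|^{2\lambda(\gamma-1)/b})$ via Corollary \ref{cor2}, not merely $C+C\|g_\ep\|$, since the lower bound on $u^\ep$ in Proposition \ref{prop1} involves $\|\rho\|_{L^1(0,T;L^q)}$; this is harmless because $2\lambda(\gamma-1)/b<2(\gamma-1)$.)

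The paper's resolution is to never estimate $\rho$ in $L^r$ with $r\geq 1$. One multiplies \eqref{adj} by $\nu\rho^{\nu-1}$ with $0<\nu<1$; then $\int\rho^\nu\leq(\int\rho)^\nu=1$ at every time, so the Dirac datum is harmless, the sign of $\nu-1$ puts $\int|D(\rho^{\nu/2})|^2$ on the good side of the inequality, and Proposition \ref{prop2} delivers exactly the power $2(\gamma-1)$. The $L^{p'}$ control of $D\rho$ is then obtained not directly but through the factorization $D\rho=\tfrac{2}{\bar\nu}\,\rho^{1-\bar\nu/2}D\bigl(\rho^{\bar\nu/2}\bigr)$ and a three-way H\"older inequality with exponents $p$, $\tilde q$, $2$ as in Lemma \ref{lem61}, where $\|\rho^{1-\bar\nu/2}\|_{L^2(0,T;L^{\tilde q})}$ is bounded by Sobolev's inequality applied to $\rho^{\bar\nu/2}$ together with Proposition \ref{prop2}. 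You need to replace your $r\geq1$ energy step by this sub-unit-exponent argument; as written, the proof does not close.
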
Proposition \ref{prop} is proved in Section \ref{liphj}. By
combining Proposition \ref{prop} with Proposition \ref{regfp} we 
obtain the next Theorem, which gives the Lipschitz regularity $u^\ep$.
\begin{teo}\label{uLip}
Assume that A\ref{ham}-\ref{gamma} from Section \ref{assumptions} hold.
There exists a constant $C>0$ such that any
solution $(u^\epsilon,m^\epsilon)$ to \eqref{smfg}-\eqref{itbc} satisfies $\|Du^\epsilon\|_{L^\infty}<C$. 
\end{teo}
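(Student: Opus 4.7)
The plan is to chain Proposition \ref{regfp} and Proposition \ref{prop} into a single scalar inequality for $\|Du^\epsilon\|_{L^\infty}$ in which the right-hand side grows sublinearly, and then invoke Young's inequality together with the constraint $\gamma < 5/4$ from Assumption A\ref{gamma} to absorb the nonlinear terms.

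Concretely, set $A \doteq \|Du^\epsilon\|_{L^\infty(\Tt^d\times[0,T])}$ and $G \doteq \|g_\epsilon\|_{L^\infty(0,T;L^p(\Tt^d))}$ for a single fixed $p>1$ that is admissible in both propositions. Proposition \ref{regfp} gives
\[
G \leq C + C\, A^{2(\gamma-1)},
\]
while Proposition \ref{prop} gives
\[
A \leq C + C\,G + C\,G\, A^{2(\gamma-1)}.
\]
Substituting the first estimate into the second and expanding produces
\[
A \leq C + C\, A^{2(\gamma-1)} + C\, A^{4(\gamma-1)},
\]
where all constants are independent of $\epsilon$.

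Now I would invoke Assumption A\ref{gamma}. Since $1 < \gamma < 5/4$, both exponents satisfy $2(\gamma-1) < 1/2 < 1$ and $4(\gamma-1) < 1$. By Young's inequality (or by a straightforward coercivity argument on the function $s \mapsto s - C s^{4(\gamma-1)} - C s^{2(\gamma-1)} - C$, which tends to $+\infty$ as $s\to\infty$), any nonnegative number $A$ satisfying the above bound must lie below a constant $C$ depending only on the data, but independent of $\epsilon$. This yields $\|Du^\epsilon\|_{L^\infty} < C$ as claimed.

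The main obstacle in this argument, which is already addressed by the two preceding Propositions, is ensuring that the feedback loop between the Lipschitz bound for $u^\epsilon$ and the $L^p$ bound for $g_\epsilon$ closes. The algebraic step here is easy, but it is precisely the exponent arithmetic $4(\gamma-1)<1$ that forces the sharp restriction $\gamma < 5/4$ in Assumption A\ref{gamma}; any weaker control of $g_\epsilon$ in Proposition \ref{regfp}, or any stronger dependence on $A$ in Proposition \ref{prop}, would break the absorption and leave $A$ unbounded.
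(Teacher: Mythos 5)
Your proposal is correct and follows essentially the same route as the paper: substitute the bound of Proposition \ref{regfp} into Proposition \ref{prop} to obtain a self-bounding inequality with leading exponent $4(\gamma-1)<1$ under A\ref{gamma}, and conclude by a weighted Young's (coercivity) argument. The only cosmetic difference is that you retain the intermediate $A^{2(\gamma-1)}$ term explicitly, which the paper absorbs into the dominant one.
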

\begin{proof}
By combining Propositions \ref{regfp} and \ref{prop}, we get
\begin{align*}
\|Du^\epsilon\|_{L^\infty(\Tt^d\times[0,T])}&\leq C+C\|Du^\epsilon\|_{L^\infty(\Tt^d\times[0,T])}^{4(\gamma-1)}.
\end{align*}
Assumption A\ref{gamma} ensures that $4(\gamma-1)<1$. Hence,
a weighted Young's inequality yields the result.
\end{proof}

Once this is done, to prove Theorem \ref{main}, we need to establish additional regularity for $(u^\ep,m^\ep)$. This allows us to pass to the limit $\ep\to 0$ and to conclude that $(u,m)=\lim_{\epsilon\to 0}(u^\ep,m^\ep)$ solves the system \eqref{mfg}-\eqref{itbc} in some appropriate sense as well. Because $(u,m)$ has the same regularity of $(u^\ep,m^\ep)$, the existence of classical solutions is proven. This argument is set forth in Section \ref{liphj}. 


\section{Estimates in $L^\infty(\Tt^d\times[0,T])$}\label{ee}

Fix $x_0\in \Tt^d$ and $0\leq\tau<T$.
Based upon the ideas in \cite{E3} (see also \cite{T1}), 
we begin by introducing the linearized adjoint equation
\begin{equation}\label{adj}
\begin{cases}
\rho_t-\div(D_pH(x,Du^\epsilon)\rho)=\Delta \rho\\\rho(x,\tau)\,=\,\delta_{x_0},
\end{cases}
\end{equation}
where $\delta_{x_0}$ is the Dirac delta centered at $x_0$. 

If $g_\ep$ were bounded from below, the optimal control formulation for the Hamilton-Jacobi equation in \eqref{smfg} would immediately yield an estimate from below for the value function $u^\ep$ in $L^\infty(\Tt^d\times[0,T])$. This is not the case in the presence of a logarithmic nonlinearity. The next Proposition investigates upper bounds for the solutions of the Hamilton-Jacobi equation.

\begin{Proposition}\label{prop1}
Let $(u^\ep,m^\ep)$ be a solution of \eqref{smfg}, and assume that A\ref{ham}
is satisfied. Suppose further that $\rho$ solves \eqref{adj}. Then, for any $p, q\geq 1$ such that 
\begin{equation}\label{rest1}
\frac{1}{p}\,+\,\frac{1}{q}\,=\,1
\end{equation}and 
\begin{equation}\label{rest2}
p>\frac{d}{2},
\end{equation}
we have
\begin{equation}\label{star1}
\|u^\ep\|_{L^\infty(\Tt^d\times[0,T])}\,\leq\,C\,+\,C\|g_\ep\|_{L^\infty(0,T;L^p(\Tt^d))}\left(1\,+\,\|\rho\|_{L^1(0,T;L^q(\Tt^d))}\right).
\end{equation}
\end{Proposition}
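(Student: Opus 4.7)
The plan is to derive a representation formula for $u^\ep(x_0,\tau)$ via the nonlinear adjoint method (following \cite{E3}), and to combine it with a twin identity obtained by pairing against the heat kernel to produce matching one-sided estimates.

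For the lower bound I would compute $\frac{d}{dt}\int_{\Tt^d}u^\ep\rho\,dx$ using the Hamilton-Jacobi equation in \eqref{smfg} for $u^\ep_t$ and the adjoint equation \eqref{adj} for $\rho_t$. Integrating by parts twice on the torus transfers the Laplacian from $u^\ep$ onto $\rho$ so that the second-order contributions cancel, while the drift term produces $-\int D_pH\cdot Du^\ep\,\rho\,dx$, giving
\[
\frac{d}{dt}\int u^\ep\rho\,dx \;=\; \int(H-D_pH\cdot Du^\ep)\rho\,dx \;-\; \int g_\ep\rho\,dx.
\]
Integrating from $\tau$ to $T$ and using $\rho(\cdot,\tau)=\delta_{x_0}$ (which is legitimized by mollifying the Dirac datum, carrying out the computation for the smoothed problem, and passing to the limit using the instantaneous parabolic smoothing of $\rho$ for $t>\tau$) yields the representation
\[
u^\ep(x_0,\tau) \;=\; \int u_T\rho(\cdot,T)\,dx \;+\; \int_\tau^T\!\!\int(D_pH\cdot Du^\ep-H)\rho\,dx\,dt \;+\; \int_\tau^T\!\!\int g_\ep\rho\,dx\,dt.
\]
Assumption A\ref{dphminush} together with $H\geq 0$ implies $D_pH\cdot Du^\ep-H\geq -C$; combined with $\int\rho\,dx\equiv 1$ and H\"older's inequality with exponents satisfying \eqref{rest1}, this produces $u^\ep(x_0,\tau)\geq -C-C\|g_\ep\|_{L^\infty(0,T;L^p(\Tt^d))}\|\rho\|_{L^1(0,T;L^q(\Tt^d))}$.

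For the matching upper bound I would repeat the argument with $\rho$ replaced by the heat kernel $\sigma$ solving $\sigma_t=\Delta\sigma$ with $\sigma(\cdot,\tau)=\delta_{x_0}$; this is the adjoint of the Hamilton-Jacobi equation taken with zero drift, equivalently the law of the uncontrolled diffusion $\bv=0$ in the stochastic formulation of Subsection \ref{motivation}. Without the drift the identity reduces to
\[
u^\ep(x_0,\tau) \;=\; \int u_T\sigma(\cdot,T)\,dx \;-\; \int_\tau^T\!\!\int H\,\sigma\,dx\,dt \;+\; \int_\tau^T\!\!\int g_\ep\,\sigma\,dx\,dt,
\]
and $H\geq 0$ immediately yields $u^\ep(x_0,\tau)\leq\|u_T\|_\infty+\|g_\ep\|_{L^\infty(L^p)}\|\sigma\|_{L^1(L^q)}$. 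The main obstacle is bounding $\|\sigma\|_{L^1_tL^q_x}$ by a universal constant: the classical heat-kernel estimate $\|\sigma(\cdot,t)\|_{L^q(\Tt^d)}\lesssim (t-\tau)^{-d/(2p)}$ is integrable in time precisely when $d/(2p)<1$, i.e.\ when $p>d/2$, which is exactly condition \eqref{rest2}; this is the sole place where the dimensional restriction in A\ref{ham} couples with the admissible exponents. Once this is in hand, combining with the lower bound and taking the supremum over $(x_0,\tau)\in\Tt^d\times[0,T]$ establishes \eqref{star1}.
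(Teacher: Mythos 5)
Your proof is correct and follows essentially the same route as the paper: the lower bound is exactly the paper's duality computation pairing the Hamilton--Jacobi equation with the adjoint variable $\rho$, and your heat-kernel argument for the upper bound is a self-contained version of the estimate the paper simply imports from \cite{GPM2}, with $p>\tfrac{d}{2}$ entering precisely through the integrability in time of $\|\sigma(\cdot,t)\|_{L^q(\Tt^d)}\lesssim (t-\tau)^{-d/(2p)}$. One small point: the Proposition assumes only A\ref{ham}, so instead of invoking A\ref{dphminush} to get $D_pH(x,Du^\ep)\cdot Du^\ep-H\ge -C$, you should observe that the convexity of $p\mapsto H(x,p)$ already yields $D_pH(x,p)\cdot p-H(x,p)\ge -H(x,0)\ge -C$.
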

\begin{proof}For ease of presentation, we drop the $\ep$ in this proof.
Let $0\leq \tau\leq T$.
If \eqref{rest2} is satisfied, it follows that
$$u(x_0,\tau)\,\leq\,C\,+\,C\|g\|_{L^\infty(0,T;L^p(\Tt^d))},$$see \cite{GPM2}. Furthermore,
by multiplying the first equation of \eqref{smfg} by $\rho$ and \eqref{adj} by $u$, adding them, and integrating by parts, we obtain that (using $0\leq \tau\leq T$)
\begin{align*}
u(x_0,\tau)\,&\geq\,-\,C\,-\,C\int_0^T\int_{\Tt^d}|g\rho|dxdt\\&\geq \,-\,C\,-\,\,C\|g\|_{L^\infty(0,T;L^p(\Tt^d))}\|\rho\|_{L^1(0,T;L^q(\Tt^d))},
\end{align*}where the second estimate follows from H\"older's inequality. Thus, we have proven \eqref{star1}.
\end{proof}

The next Corollary establishes a key estimate.

\begin{Corollary}\label{cor1}
Let $(u^\ep,m^\ep)$ be a solution of \eqref{smfg} and assume that A\ref{ham}-\ref{dphminush} hold. Assume further that $\rho$ solves \eqref{adj}, fix $p, q\geq 1$ such that \eqref{rest1}-\eqref{rest2} are satisfied. Then,
$$\int_0^T\int_{\Tt^d}H\rho dxdt\,\leq\,C\,+\,C\|g_\ep\|_{L^\infty(0,T;L^p(\Tt^d))}\left(1+\|\rho\|_{L^1(0,T;L^q(\Tt^d))}\right).$$
\end{Corollary}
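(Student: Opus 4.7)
The plan is to revisit the duality identity that drove the proof of Proposition \ref{prop1}, this time keeping the full term $(D_pH\cdot Du - H)\rho$ rather than merely throwing away its sign. The starting point is the computation
\begin{equation*}
\frac{d}{dt}\int_{\Tt^d} u\,\rho\, dx = \int_{\Tt^d}\bigl(D_pH(x,Du)\cdot Du - H(x,Du) - g_\epsilon\bigr)\rho\, dx,
\end{equation*}
obtained by differentiating $\int u\rho$, substituting $u_t$ from \eqref{smfg} and $\rho_t$ from \eqref{adj}, and integrating by parts. Integrating this identity from $\tau$ to $T$ and using $\rho(\cdot,\tau)=\delta_{x_0}$ gives
\begin{equation*}
u^\epsilon(x_0,\tau) = \int_{\Tt^d} u_T\,\rho(\cdot,T)\, dx + \int_\tau^T\!\!\int_{\Tt^d}\bigl(D_pH\cdot Du - H\bigr)\rho\, dx\, dt - \int_\tau^T\!\!\int_{\Tt^d} g_\epsilon\,\rho\, dx\, dt.
\end{equation*}

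Next, I would invoke Assumption A\ref{dphminush} pointwise to rewrite $D_pH\cdot Du - H \geq CH - C$, and use the fact that $\rho$ is a probability measure for each $t$ (the adjoint equation is in conservation form, so mass is preserved). This gives
\begin{equation*}
C\int_\tau^T\!\!\int_{\Tt^d} H\,\rho\, dx\, dt \leq u^\epsilon(x_0,\tau) + \|u_T\|_{L^\infty(\Tt^d)} + C + \int_\tau^T\!\!\int_{\Tt^d} |g_\epsilon|\,\rho\, dx\, dt.
\end{equation*}
The last term is estimated by H\"older's inequality in space and time using the exponents $p,q$ from \eqref{rest1}, producing $\|g_\epsilon\|_{L^\infty(0,T;L^p)}\|\rho\|_{L^1(0,T;L^q)}$.

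Finally, I would close the estimate by applying Proposition \ref{prop1} to bound $u^\epsilon(x_0,\tau)\leq \|u^\epsilon\|_{L^\infty}$ from above by $C + C\|g_\epsilon\|_{L^\infty(0,T;L^p)}(1+\|\rho\|_{L^1(0,T;L^q)})$. Combining the two bounds and absorbing the constants yields the desired inequality. I do not expect any real obstacle here: the argument is essentially a bookkeeping exercise that reuses exactly the duality identity behind Proposition \ref{prop1}, the only new ingredient being the one-sided bound on $D_pH\cdot p - H$ supplied by A\ref{dphminush}. The only step to handle carefully is verifying mass conservation for $\rho$ (so that the terminal contribution from $u_T$ is controlled by $\|u_T\|_{L^\infty}$) and making sure the H\"older pairing is applied with the same $(p,q)$ used on the upper bound side.
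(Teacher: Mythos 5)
Your argument is exactly the paper's proof: multiply the Hamilton--Jacobi equation by $\rho$ and the adjoint equation by $u$, integrate by parts, invoke A\ref{dphminush} to extract $C\int\!\!\int H\rho$, estimate the $g_\epsilon$ term by H\"older with the exponents of \eqref{rest1}, and close with Proposition \ref{prop1}; the only blemish is a sign inconsistency between your two displayed identities (the correct one is $\tfrac{d}{dt}\int u\rho=\int(H-D_pH\cdot Du-g_\epsilon)\rho\,dx$, which upon integration gives $+\int\!\!\int g_\epsilon\rho$ in the second display), but since you bound that term in absolute value it does not affect the conclusion.
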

\begin{proof}As before, we omit the $\ep$ in this proof.
Multiply the first equation of \eqref{smfg} by $\rho$ and \eqref{adj} by $u$, add them, and integrate by parts to obtain
$$c\int_0^T\int_{\Tt^d}H\rho dxdt\,\leq \,C\,+\,u^\ep(x_0,0\,)+\,C\|g_\ep\|_{L^\infty(0,T;L^p(\Tt^d))}\|\rho\|_{L^1(0,T;L^q(\Tt^d))},$$where we have used A\ref{dphminush}. Proposition \ref{prop1} yields the result.
\end{proof}

The former estimates depend both on $g_\ep$ as well as on the adjoint variable $\rho$.
In the next Section, we investigate the regularity of $\rho$ in an attempt to remove this dependence.

\section{Regularity for the adjoint variable}

The following Proposition is critical to investigating the regularity of the solutions $u^\ep$ to the regularized Hamilton-Jacobi equation in \eqref{smfg}.

\begin{Proposition}\label{prop2}
Let $(u^\ep,m^\ep)$ be a solution of \eqref{smfg}, and assume that A\ref{dphsq} holds. Let $0\leq \tau\leq T$, $\nu\in(0,1)$ and $\rho$ be a solution of \eqref{adj}.
Then,
\begin{equation}\label{star2}
\int_\tau^T\int_{\Tt^d}\left|D\left(\rho^\frac{\nu}{2}\right)\right|^2dxdt\,\leq\,C\,+\,C\left\|Du^\ep\right\|^{2\left(\gamma-1\right)}_{L^\infty(\Tt^d\times[0,T])}.
\end{equation}
\end{Proposition}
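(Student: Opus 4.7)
The plan is to derive an energy estimate for the adjoint equation \eqref{adj} by testing it against the sublinear power $\rho^{\nu-1}$. Because $\nu\in(0,1)$, the Laplacian term becomes (after integration by parts) a coercive quadratic form in $D(\rho^{\nu/2})$, while the time derivative collapses to boundary contributions of $\int_{\Tt^d}\rho^\nu\,dx$ that are bounded thanks to conservation of mass $\int_{\Tt^d}\rho(\cdot,t)\,dx\equiv 1$ and Jensen's inequality (yielding $\int\rho^\nu\le|\Tt^d|^{1-\nu}$). The drift term, in turn, will be controlled by Assumption A\ref{dphsq}, which bounds $|D_pH|^2$ pointwise by $C+C\|Du^\ep\|_{L^\infty}^{2(\gamma-1)}$.

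Concretely, after multiplying \eqref{adj} by $\rho^{\nu-1}$, integrating on $(\tau,T)\times\Tt^d$, and using the pointwise identities $\rho^{\nu-1}D\rho=\tfrac{2}{\nu}\rho^{\nu/2}D(\rho^{\nu/2})$ and $\rho^{\nu-2}|D\rho|^2=\tfrac{4}{\nu^2}|D(\rho^{\nu/2})|^2$, I arrive at
\begin{align*}
\frac{4(1-\nu)}{\nu^2}\int_\tau^T\!\!\int_{\Tt^d}|D(\rho^{\nu/2})|^2\,dx\,dt
&=\frac{1}{\nu}\int_{\Tt^d}\rho^\nu(x,T)\,dx\\
&\quad+\frac{2(\nu-1)}{\nu}\int_\tau^T\!\!\int_{\Tt^d}\rho^{\nu/2}D(\rho^{\nu/2})\cdot D_pH\,dx\,dt,
\end{align*}
where the $t=\tau$ contribution vanishes because $\rho^\nu$ has mass of order $s^{(1-\nu)d/2}\to 0$ as $s=t-\tau\to 0^+$. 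I would then estimate the drift integral by Cauchy-Schwarz and Young's inequality, absorbing a small multiple of $\int|D(\rho^{\nu/2})|^2$ into the left-hand side, and use Assumption A\ref{dphsq} together with $\int\rho^\nu\le|\Tt^d|^{1-\nu}$ to obtain $\int_\tau^T\!\!\int_{\Tt^d}\rho^\nu|D_pH|^2\le(C+C\|Du^\ep\|_{L^\infty}^{2(\gamma-1)})(T-\tau)|\Tt^d|^{1-\nu}$. This yields \eqref{star2}.

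The main technical obstacle is the singular initial condition $\rho(\cdot,\tau)=\delta_{x_0}$, which makes the computation above formal near $t=\tau$. I would handle this by replacing $\delta_{x_0}$ with a mollification $\delta_{x_0}^\eta$ of unit mass, establishing the estimate with constants independent of $\eta$—indeed the right-hand side depends only on $\|Du^\ep\|_{L^\infty}$ and on $\nu,d,T$—and then passing to the limit $\eta\to 0$, using that $\int_{\Tt^d}(\delta_{x_0}^\eta)^\nu\,dx=O(\eta^{d(1-\nu)})\to 0$. Modulo this regularization, \eqref{star2} is a standard Moser/Nash-type energy bound exploiting the concavity of $z\mapsto z^\nu$.
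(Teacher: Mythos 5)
Your proof is correct and follows exactly the paper's route: multiply the adjoint equation by a multiple of $\rho^{\nu-1}$, integrate by parts to produce the coercive term $\left|D\left(\rho^{\nu/2}\right)\right|^2$, absorb the drift contribution via Cauchy--Schwarz and Young, and control $\int_{\Tt^d}\rho^\nu\,dx$ by $|\Tt^d|^{1-\nu}$ using that $\rho$ is a probability measure and $0<\nu<1$, before invoking A3. You merely spell out the ``elementary estimates'' the paper leaves implicit, and your mollification of the Dirac initial datum is a careful (and harmless, since that boundary term enters with a favorable sign) addition.
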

\begin{proof}As before, we omit the $\epsilon$ throughout the proof. For $0<\nu<1$, multiply \eqref{adj} by $\nu\rho^{\nu-1}$ and integrate by parts to obtain, after some elementary estimates,
and using Assumption \ref{dphsq}
$$
\int_\tau^T\int_{\Tt^d}\left|D\left(\rho^\frac{\nu}{2}\right)\right|^2dxdt\,\leq\,C+
C\int_\tau^T\int_{\Tt^d}|D_pH|^2\rho^\nu dxdt
\leq 
C+C\|Du\|^{2(\gamma-1)}_{L^\infty(\Tt^d\times[0,T])}, 
$$
because $\rho$ is a probability measure for each fixed time and $0<\nu<1$. 
%
\end{proof}

We state now a technical lemma:
\begin{Lemma}
\label{techlem1}
Suppose $q, b, \lambda\in\Rr$ satisfy
\begin{equation}
\label{techcon}
q,\,b\,\geq 1,\;\;\;0<\lambda<1,\;\;\;q< \frac{d b}{bd-2\lambda}.
\end{equation}
Denote by $2^*$ the Sobolev conjugated exponent $2^*=\frac{2d}{d-2}$.
Then there exists  $a,\,M\geq 1$, $Q\geq q$, 
$0<\kappa<1$ and $0<\tilde{\nu}<1$ such that 
\begin{equation}\label{rest3}
\frac{1}{M}=\frac{\lambda}{b},
\end{equation}
\begin{equation}\label{rest4}
\frac{1}{Q}=1-\lambda+\frac{\lambda}{a}, 
\end{equation}
\begin{equation}\label{rest5}
\frac{1}{a}=1-\kappa+\frac{2\kappa}{2^*\tilde{\nu}}, 
\end{equation}
and
\begin{equation}\label{restkey}
\frac{\kappa b}{\tilde{\nu}}\leq 1.
\end{equation}
\end{Lemma}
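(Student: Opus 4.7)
The plan is to construct the five parameters sequentially, reducing all constraints to a single compatibility condition that turns out to be exactly \eqref{techcon}. First, set $M := b/\lambda$, so that \eqref{rest3} holds and $M \geq 1$ follows from $b \geq 1$ and $\lambda < 1$. From \eqref{rest4}, $Q$ is determined by the choice of $a$, and the requirement $Q \geq q$ translates into the explicit lower bound
\[
a \;\geq\; A := \frac{\lambda q}{1 - q(1-\lambda)},
\]
whose denominator is positive since $q < bd/(bd - 2\lambda) \leq 1/(1-\lambda)$, the latter inequality being equivalent to $bd \geq 2$ which holds because $d > 2$ and $b \geq 1$.

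Next, solving \eqref{rest5} for $\kappa$ in terms of $a$ and $\tilde\nu$ yields
\[
\kappa \;=\; \frac{(1 - 1/a)\, 2^* \tilde\nu}{2^* \tilde\nu - 2},
\]
and substituting into \eqref{restkey} reduces the inequality $\kappa b / \tilde\nu \leq 1$ to
\[
\tilde\nu \;\geq\; \tilde\nu_0(a) := \Bigl(1 - \tfrac{1}{a}\Bigr)\, b + \tfrac{d-2}{d}.
\]
Because we must have $\tilde\nu < 1$, this forces $\tilde\nu_0(a) < 1$, which rearranges to the upper bound $a < B := bd/(bd - 2)$.

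With these preliminaries the construction is straightforward: pick $a \in (\max(1, A), B)$, then $\tilde\nu \in (\tilde\nu_0(a), 1)$, and finally define $\kappa$ from the displayed formula and $Q$ from \eqref{rest4}. The remaining bounds $0 < \kappa < 1$ follow, respectively, from $a > 1$ and from $\tilde\nu > a(d-2)/d$; the latter is implied by $\tilde\nu > \tilde\nu_0(a)$ together with $a < B \leq bd/(d-2)$ (which uses only $b \geq 1$).

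The one genuine obstacle is showing that the interval $(\max(1, A), B)$ for $a$ is nonempty, i.e., that $A < B$. A direct manipulation, clearing denominators, reveals that $A < B$ is equivalent to $q(bd - 2\lambda) < bd$, which is precisely the final hypothesis in \eqref{techcon}. Thus \eqref{techcon} is both necessary and sufficient for the entire construction to go through, and no other nontrivial inequality needs to be invoked.
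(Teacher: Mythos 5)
Your proof is correct and follows essentially the same strategy as the paper's: both proofs fix $M$ from \eqref{rest3} and then eliminate the remaining parameters one by one, finding that the solvability of the resulting system of inequalities is exactly the hypothesis $q<\tfrac{db}{bd-2\lambda}$. Your version merely carries out the elimination in a different order (isolating an explicit admissible interval $\bigl(\max(1,A),B\bigr)$ for $a$ and then choosing $\tilde\nu$ close to $1$, rather than letting $\tilde\nu\to1$ at the end) and is written out in full where the paper only sketches the computation.
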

\begin{proof}
The Lemma is established by elementary computations that we outline next. 
Because $\lambda<1$ and $b\geq 1$, it is always possible to choose $M$ such that \eqref{rest3}
holds. We write the condition $a\geq 1$ as $\frac 1 a \leq 1$ with $\frac 1 a>0$. These two inequalities, 
together with \eqref{rest4} and \eqref{rest5} are linear in $\frac 1 a$. From this, we conclude that
a real number $a$ satisfying those conditions
exists as long as
\[
Q\geq q> \frac{d \tilde{\nu}}{\kappa \lambda (2+d (\tilde{\nu}-1))-d \tilde{\nu}}
\]
and $\tilde \nu>\frac{d-2}{d}$.
From this, the existence of $Q$ is also trivial. 
Combining this condition with \eqref{restkey}, 
the existence of $0<\kappa<1$ follows, as long as  
\begin{equation}
\label{starfish}
q < \frac{b d}{b d - 2 \lambda + d \lambda - d \lambda \tilde \nu}
\end{equation}
holds. The condition $q< \frac{d b}{bd-2\lambda}$ then results from the elimination of the remaining variable $\tilde \nu$, 
which occurs as $\tilde \nu\to 1$ in \eqref{starfish}.
 \end{proof}

The bounds in Section \ref{ee} depend on norms of $\rho$ in $L^1(0,T;L^q(\Tt^d))$, 
which are estimated in the next Lemma. 
\begin{Lemma}\label{lem1}
Let $(u^\ep,m^\ep)$ be a solution of \eqref{smfg} and $\rho$ be a solution to \eqref{adj}. Assume that A\ref{dphsq} holds and suppose $q, b, \lambda\in \Rr$ satisfy
\eqref{techcon}. Then
$$\|\rho\|_{L^1(0,T;L^q(\Tt^d))}\,\leq\,C\,+\,C\|Du^\ep\|^\frac{2\lambda(\gamma-1)}{b}_{L^\infty(\Tt^d\times[0,T])}.$$
\end{Lemma}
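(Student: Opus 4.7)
The plan is to chain two H\"older interpolations with a Sobolev embedding so that estimating $\|\rho\|_{L^1(0,T;L^q)}$ reduces to the space--time integral $\int_\tau^T\!\int_{\Tt^d}|D\rho^{\tilde\nu/2}|^2\,dx\,dt$, which is already controlled by Proposition \ref{prop2}. The intermediate exponents from Lemma \ref{techlem1} are chosen precisely so that the final power of $\|Du^\ep\|_{L^\infty}$ comes out to $2\lambda(\gamma-1)/b$.

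First I would use that $\rho(\cdot,t)$ is a probability density for every $t$ (the adjoint equation is in divergence form and starts from a Dirac mass), so $\|\rho(\cdot,t)\|_{L^1}=1$. Since $Q\geq q$ and $\Tt^d$ has finite measure, it suffices to estimate $\|\rho\|_{L^1(0,T;L^Q)}$. The relations \eqref{rest4} and \eqref{rest5} are exactly the standard interpolation identities between $L^1$ and $L^a$, and between $L^1$ and $L^{2^*\tilde\nu/2}$, respectively, giving
\[
\|\rho(\cdot,t)\|_{L^Q}\leq\|\rho(\cdot,t)\|_{L^a}^{\lambda}\leq\|\rho(\cdot,t)\|_{L^{2^*\tilde\nu/2}}^{\kappa\lambda}.
\]
Then writing $\|\rho\|_{L^{2^*\tilde\nu/2}}=\|\rho^{\tilde\nu/2}\|_{L^{2^*}}^{2/\tilde\nu}$, invoking the Sobolev embedding $H^1(\Tt^d)\hookrightarrow L^{2^*}(\Tt^d)$, and bounding $\int\rho^{\tilde\nu}\leq C$ by H\"older (since $\tilde\nu<1$ and $\int\rho=1$), I arrive at the pointwise-in-time estimate
\[
\|\rho(\cdot,t)\|_{L^Q}\leq C\bigl(1+\|D\rho^{\tilde\nu/2}(\cdot,t)\|_{L^2}\bigr)^{2\kappa\lambda/\tilde\nu}.
\]

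Next I would integrate in time. The key point is that condition \eqref{restkey} gives $2\kappa\lambda/\tilde\nu\leq 2\lambda/b\leq 2$, so H\"older in $t$ converts the above into an $L^2(0,T)$ estimate:
\[
\int_0^T\|\rho\|_{L^Q}\,dt\leq C+C\Bigl(\int_0^T\|D\rho^{\tilde\nu/2}\|_{L^2}^2\,dt\Bigr)^{\kappa\lambda/\tilde\nu}.
\]
Proposition \ref{prop2} bounds the space--time integral by $C+C\|Du^\ep\|_{L^\infty}^{2(\gamma-1)}$. Since $\kappa\lambda/\tilde\nu\leq\lambda/b$ by \eqref{restkey} and the case $\|Du^\ep\|_{L^\infty}\leq 1$ is absorbed into $C$, I conclude $\|\rho\|_{L^1(0,T;L^Q)}\leq C+C\|Du^\ep\|_{L^\infty}^{2\lambda(\gamma-1)/b}$, after which the claim follows from $\|\cdot\|_{L^q}\leq C\|\cdot\|_{L^Q}$.

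The main subtlety is that \eqref{restkey} is used twice: to guarantee that the time H\"older exponent is $\leq 2$ (so the space--time integrability from Proposition \ref{prop2} can be invoked) and, afterwards, to upgrade the raw exponent $2\kappa\lambda(\gamma-1)/\tilde\nu$ to the desired $2\lambda(\gamma-1)/b$. The apparently baroque construction of $a$, $\tilde\nu$, $\kappa$, $Q$ in Lemma \ref{techlem1} is precisely an encoding of this double role, and verifying that all exponents are compatible is the only real work in the proof.
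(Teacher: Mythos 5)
Your argument is correct and follows essentially the same route as the paper: interpolate $L^Q$ between $L^1$ and $L^a$, then $L^a$ between $L^1$ and $L^{2^*\tilde\nu/2}$, apply Sobolev to reduce to $\|D\rho^{\tilde\nu/2}\|_{L^2}$, and use \eqref{restkey} together with Proposition \ref{prop2} to close the time integration; the only (cosmetic) difference is that you perform the interpolations pointwise in time and apply H\"older in $t$ at the end, whereas the paper phrases the same chain as a mixed-norm interpolation through $L^M(0,T;L^Q)$ and $L^b(0,T;L^a)$. The exponent bookkeeping, including the double use of \eqref{restkey}, matches the paper's.
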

\begin{proof}
Since \eqref{techcon} holds, fix $M$, $Q$, $a$, $\kappa$, and $\tilde{\nu}$
 as in the statement of Lemma \ref{techlem1}.
Hence, by H\"older's inequality combined with \eqref{rest3} and \eqref{rest4}, we have
$$\|\rho\|_{L^1(0,T;L^q(\Tt^d))}\,\leq\,\|\rho\|_{L^M(0,T;L^Q(\Tt^d))}\,\leq\,\|\rho\|^{1-\lambda}_{L^\infty(0,T;L^1(\Tt^d))}\|\rho\|^{\lambda}_{L^b(0,T;L^a(\Tt^d))}. $$
Using 
H\"older's inequality once more, together with \eqref{rest5}, yields
$$\left(\int_{\Tt^d}\rho^a dx\right)^\frac{1}{a}\,\leq\,\left(\int_{\Tt^d}\rho dx\right)^{1-\kappa}\left(\int_{\Tt^d}\rho^\frac{2^*\tilde{\nu}}{2}dx\right)^\frac{2\kappa}{2^*\tilde{\nu}}.$$
We have, by Sobolev's Theorem 
$$\left(\int_{\Tt^d}\rho^\frac{2^*\tilde{\nu}}{2}\right)^\frac{2\kappa}{2^*\tilde{\nu}}\,\leq\,C\,+\,C\left(\int_{\Tt^d}\left|D\left(\rho^\frac{\tilde{\nu}}{2}\right)\right|^2dx\right)^\frac{\kappa}{\tilde{\nu}}$$
and, therefore,
$$\int_0^T\left(\int_{\Tt^d}\rho^adx\right)^\frac{b}{a}\,\leq\,C\,+\,C\int_0^T\left(\int_{\Tt^d}\left|D\left(\rho^\frac{\tilde{\nu}}{2}\right)\right|^2dx\right)^\frac{\kappa b}{\tilde{\nu}}.$$
Using \eqref{restkey} in
%
the previous computation combined with Proposition \ref{prop2}, we obtain $$\|\rho\|^\lambda_{L^b(0,T;L^a(\Tt^d))}\leq C\,+\,C\|Du^\ep\|_{L^\infty(\Tt^d\times[0,T])}^\frac{2\lambda(\gamma-1)}{b},$$which concludes the proof.
\end{proof}

\begin{Corollary}\label{cor2}
Let $(u^\ep,m^\ep)$ be a solution of \eqref{smfg}, and assume that A\ref{ham}-\ref{dphminush} hold. Assume further that $\rho$ solves \eqref{adj} and that $\lambda$, $p$, and $b$ satisfy
\begin{equation}
\label{techcon2}
0<\lambda <1, \qquad  p>\frac{d}{2\lambda}, \qquad 1<b<\frac{2\lambda p}{d}.
\end{equation}
%
%
Then,
$$\int_0^T\int_{\Tt^d}H\rho dxdt\,\leq\,C\,+\,C\|g_\ep\|_{L^\infty(0,T;L^p(\Tt^d))}\left(1+\|Du^\ep\|_{L^\infty(\Tt^d\times[0,T])}^{\frac{2\lambda(\gamma-1)}{b}}\right).$$
\end{Corollary}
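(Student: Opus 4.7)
The strategy is transparent: the statement to be proved is just the composition of Corollary \ref{cor1} with Lemma \ref{lem1}. Corollary \ref{cor1} already bounds $\int_0^T\int_{\Tt^d}H\rho\,dxdt$ in terms of $\|g_\ep\|_{L^\infty(0,T;L^p(\Tt^d))}$ and $\|\rho\|_{L^1(0,T;L^q(\Tt^d))}$, and Lemma \ref{lem1} gives precisely the upper bound for $\|\rho\|_{L^1(0,T;L^q(\Tt^d))}$ that appears on the right-hand side of the claim. So my plan is to fix the Hölder exponent $q=p/(p-1)$, verify that the hypotheses \eqref{rest1}, \eqref{rest2} of Corollary \ref{cor1} and \eqref{techcon} of Lemma \ref{lem1} are all implied by \eqref{techcon2}, and then substitute.

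First I would check \eqref{rest1} and \eqref{rest2}. The relation $1/p+1/q=1$ is built into the choice $q=p/(p-1)$. For \eqref{rest2}, the hypothesis $p>d/(2\lambda)$ together with $0<\lambda<1$ gives $p>d/2$, so Corollary \ref{cor1} applies.

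Next I would verify the condition \eqref{techcon} of Lemma \ref{lem1}. The inequalities $0<\lambda<1$ and $b\geq 1$ come directly from \eqref{techcon2}. We have $q=p/(p-1)\geq 1$, and the nontrivial part is $q<db/(bd-2\lambda)$. Since $bd>d>2\lambda$, $bd-2\lambda>0$, and after clearing denominators this inequality is equivalent to $bd(q-1)<2q\lambda$, i.e.\ $b<\frac{2q\lambda}{d(q-1)}$. Using $q/(q-1)=p$, the right-hand side collapses to $2\lambda p/d$, matching the upper bound on $b$ postulated in \eqref{techcon2}. Hence \eqref{techcon} is fulfilled, and Lemma \ref{lem1} yields
\[
\|\rho\|_{L^1(0,T;L^q(\Tt^d))}\leq C+C\|Du^\ep\|^{\frac{2\lambda(\gamma-1)}{b}}_{L^\infty(\Tt^d\times[0,T])}.
\]

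Finally I would insert this bound into the estimate of Corollary \ref{cor1} and absorb the additive constant $C$ from Lemma \ref{lem1} into the factor $1+\|Du^\ep\|^{2\lambda(\gamma-1)/b}_{L^\infty}$, using that $\|g_\ep\|_{L^\infty(0,T;L^p(\Tt^d))}$ is multiplied by this quantity. The only obstacle is really the bookkeeping in the previous paragraph, namely confirming that the upper bound $b<2\lambda p/d$ in \eqref{techcon2} is exactly the translation of $q<db/(bd-2\lambda)$ under the conjugacy $q=p/(p-1)$; everything else is a direct citation of results already established.
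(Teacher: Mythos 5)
Your proposal is correct and follows exactly the paper's own argument: combine Corollary \ref{cor1} with Lemma \ref{lem1} after checking that \eqref{techcon2} (with $q=p/(p-1)$) implies \eqref{rest1}, \eqref{rest2}, and \eqref{techcon}. You have merely made explicit the algebraic verification that $q<db/(bd-2\lambda)$ translates to $b<2\lambda p/d$ under conjugacy, which the paper leaves as an observation.
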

\begin{proof}
The result follows by combining Corollary \ref{cor1} with Lemma \ref{lem1} and observing that \eqref{techcon}, 
\eqref{rest1}, and \eqref{rest2} are 
equivalent to \eqref{techcon2} and \eqref{rest1}. 
\end{proof}

\section{Estimates for the Fokker-Planck equation}\label{rfk}

Next, we establish several estimates concerning the integrability of solutions of the Fokker-Planck equation. 
The proof of Proposition \ref{regfp} closes this Section.

\begin{Lemma}\label{lemma1}
Let $(u^\ep,m^\ep)$ be a solution of \eqref{smfg}. Then,
$$\frac{d}{dt}\left[\ln\left(\int_{\Tt^d}\frac{1}{m+\epsilon}dx\right)\right]\leq C\left\|\left|D_pH\right|^2\right\|_{L^\infty(\Tt^d)}+C.$$
\end{Lemma}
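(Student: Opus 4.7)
The plan is to differentiate $\int_{\Tt^d}\tfrac{1}{m+\epsilon}\,dx$ in time, use the Fokker--Planck equation in \eqref{smfg} to replace $m_t$, and then integrate by parts. Writing $w=m+\epsilon$, note that $w_t=m_t$ and $Dw=Dm$, so that
\[
\frac{d}{dt}\int_{\Tt^d}\frac{1}{w}\,dx
= -\int_{\Tt^d}\frac{m_t}{w^2}\,dx
= -\int_{\Tt^d}\frac{\Delta m + \div(D_pH\, m)}{w^2}\,dx.
\]
Integrating by parts (using the periodicity of $\Tt^d$ to kill the boundary terms) produces a good negative dissipation term from the Laplacian, of the form $-2\int |Dm|^2/w^3\,dx$, and a cross term from the drift of the form $-2\int (m/w^3)\, D_pH\cdot Dm\,dx$.

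Next I would bound the cross term using the trivial inequality $m/w\le 1$ together with Cauchy--Schwarz and a weighted Young's inequality tuned to absorb exactly one half of the dissipation. Concretely, one writes
\[
2\left|\int_{\Tt^d}\frac{m}{w^3}\,D_pH\cdot Dm\,dx\right|
\le 2\int_{\Tt^d}\frac{|D_pH|\,|Dm|}{w^2}\,dx
\le \int_{\Tt^d}\frac{|Dm|^2}{w^3}\,dx + \int_{\Tt^d}\frac{|D_pH|^2}{w}\,dx,
\]
after applying $AB\le \tfrac12 A^2+\tfrac12 B^2$ with $A=|Dm|/w^{3/2}$ and $B=|D_pH|/w^{1/2}$. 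Combining this with the dissipation from the Laplacian yields
\[
\frac{d}{dt}\int_{\Tt^d}\frac{1}{w}\,dx
\le -\int_{\Tt^d}\frac{|Dm|^2}{w^3}\,dx + \bigl\||D_pH|^2\bigr\|_{L^\infty(\Tt^d)}\int_{\Tt^d}\frac{1}{w}\,dx.
\]

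To conclude, I would drop the (nonpositive) dissipation term and divide both sides by $\int_{\Tt^d}\tfrac{1}{w}\,dx$, which is strictly positive; this immediately gives the logarithmic derivative bound in the statement, with the additive $+C$ being harmless slack. The main subtlety is simply the sign bookkeeping when integrating by parts and the choice of weights in Young's inequality so that the $|Dm|^2/w^3$ dissipation term generated by $\Delta m$ is not overwhelmed by the cross term coming from the transport $\div(D_pH\, m)$; once the weights are chosen as above everything collapses cleanly, because $m/w\le 1$ prevents an $\epsilon$-dependent blow-up.
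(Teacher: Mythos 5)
Your proposal is correct and follows essentially the same route as the paper: differentiate $\int_{\Tt^d}(m+\epsilon)^{-1}\,dx$, substitute the Fokker--Planck equation, integrate by parts to produce the dissipation term $-2\int|Dm|^2/(m+\epsilon)^3\,dx$ and the drift cross term, absorb the cross term via a weighted Young's inequality using $m/(m+\epsilon)\le 1$, and divide by the positive quantity $\int_{\Tt^d}(m+\epsilon)^{-1}\,dx$ to obtain the logarithmic derivative bound. The only (immaterial) difference is that you apply $m/(m+\epsilon)\le 1$ before Young's inequality, whereas the paper applies Young's first and then bounds $m^2/(m+\epsilon)^2\le 1$.
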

\begin{proof}
Notice that 
\begin{align*}
\frac{d}{dt}\int_{\Tt^d}\frac{1}{m+\epsilon}dx=-\int_{\Tt^d}\frac{\div\left(D_pHm\right)}{(m+\epsilon)^2}dx-\int_{\Tt^d}\frac{\Delta m}{(m+\epsilon)^2}dx.
\end{align*}Integration by parts yields
$$-\int_{\Tt^d}\frac{\div\left(D_pHm\right)}{(m+\epsilon)^2}dx=-2\int_{\Tt^d}\frac{D_pHmDm}{(m+\epsilon)^\frac{3}{2}(m+\epsilon)^\frac{3}{2}}$$and 
$$-\int_{\Tt^d}\frac{\Delta m}{(m+\epsilon)^2}=-2\int_{\Tt^d}\frac{\left|Dm\right|^2}{(m+\epsilon)^3}dx.$$
Hence,  for some $C, c>0$, 
\begin{align*}
\frac{d}{dt}\int_{\Tt^d}\frac{1}{m+\epsilon}dt&\leq C\int_{\Tt^d}\frac{|D_pH|^2m^2}{(m+\epsilon)^3}dx-c\int_{\Tt^d}\frac{|Dm|^2}{(m+\epsilon)^3}dx\\&\leq C\left\|\left|D_pH\right|^2\right\|_{L^\infty(\Tt^d)}\int_{\Tt^d}\frac{1}{m+\epsilon}dx.
\end{align*}Consequently,
$$\frac{d}{dt}\left[\ln\left(\int_{\Tt^d}\frac{1}{m+\epsilon}dx\right)\right]\leq C\left\|\left|D_pH\right|^2\right\|_{L^\infty(\Tt^d)}.$$
\end{proof}
\begin{Lemma}\label{lemma2}
Let $m:\Tt^d\to \Rr^+_0$ be integrable with $\int_{\Tt^d} m=1$. Then,
$$\int_{\Tt^d}|\ln (m+\epsilon)|^pdx\leq C+C\int_{m+\ep\leq 1}\left(\ln\frac{1}{m+\epsilon}\right)^pdx.$$
\end{Lemma}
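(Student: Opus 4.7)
The plan is to split the domain $\Tt^d$ into the two regions $\{m+\epsilon\leq 1\}$ and $\{m+\epsilon>1\}$ and handle the contributions separately. On the first region, the term $|\ln(m+\epsilon)|^p$ coincides exactly with $\left(\ln\frac{1}{m+\epsilon}\right)^p$, since $\ln(m+\epsilon)\leq 0$ there. Thus this piece is already the right-hand side integral and needs no further work.

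On the second region $\{m+\epsilon>1\}$, the integrand is $(\ln(m+\epsilon))^p$ with $m+\epsilon\geq 1$. Here I would use the elementary fact that for every $p\geq 1$ there exists a constant $C_p$ with
\[
(\ln z)^p \leq C_p\,z \qquad \text{for all } z\geq 1,
\]
which follows because $(\ln z)^p/z\to 0$ as $z\to\infty$ and the function is bounded on $[1,z_0]$. Applying this with $z=m+\epsilon$ gives
\[
\int_{m+\epsilon>1}\left|\ln(m+\epsilon)\right|^p dx \,\leq\, C_p\int_{m+\epsilon>1}(m+\epsilon)\,dx \,\leq\, C_p\int_{\Tt^d}(m+\epsilon)\,dx \,=\, C_p(1+\epsilon),
\]
using $\int_{\Tt^d} m\,dx=1$ and $|\Tt^d|=1$. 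Since $\epsilon$ is assumed small (say $\epsilon\leq 1$), this piece is bounded by a constant independent of $\epsilon$.

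Summing the two contributions yields the desired inequality. There is no real obstacle here: the only subtle point is ensuring that the constant $C_p$ in the growth estimate $(\ln z)^p\leq C_p z$ depends only on $p$, which is immediate. The whole argument uses only the mass constraint $\int m=1$ and the sublinear growth of the logarithm at infinity, and it is precisely the region where $m$ is very small (i.e.\ where $\ln m$ blows up to $-\infty$) that gets isolated on the right-hand side, setting up the subsequent use of the bound on $\tfrac{1}{m+\epsilon}$ obtained from Lemma \ref{lemma1}.
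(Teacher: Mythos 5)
Your proposal is correct and follows essentially the same route as the paper: split $\Tt^d$ into $\{m+\epsilon\leq 1\}$, where the integrand equals $\left(\ln\tfrac{1}{m+\epsilon}\right)^p$, and $\{m+\epsilon>1\}$, where the sublinear growth of the logarithm together with $\int_{\Tt^d} m\,dx=1$ bounds the contribution by a constant. The paper uses the bound $\ln z\leq C_\delta z^\delta$ with $\delta$ small instead of your $(\ln z)^p\leq C_p z$, but this is only a cosmetic difference.
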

\begin{proof}
We have $$\int_{\Tt^d}|\ln (m+\epsilon)|^pdx=\int_{m+\ep\leq 1}\left(\ln\frac{1}{m+\epsilon}\right)^pdx+\int_{m+\ep>1}\left[\ln (m+\epsilon)\right]^pdx.$$
Because
$\ln (m+\epsilon)\leq C_\delta(m+\epsilon)^\delta$ for every $\delta>0$, provided $(m+\epsilon)>1$, we conclude
$$\int_{\Tt^d}|\ln (m+\epsilon)|^pdx=\int_{m+\ep\leq 1}\left(\ln\frac{1}{m+\ep}\right)^pdx+C.$$
\end{proof}
\begin{Lemma}\label{lemma3}
There exists $0<A$, depending solely on $p$, such that $\left(\ln z\right)^p$is a concave function for $z>\frac{1}{A}$.
\end{Lemma}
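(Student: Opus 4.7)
The plan is to prove the lemma by direct computation of the second derivative of $f(z) = (\ln z)^p$ and identifying the threshold beyond which $f'' \leq 0$. Since we are told $A$ depends only on $p$, I expect the threshold to come out as an explicit function of $p$, most naturally something like $e^{p-1}$.

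First I would differentiate twice. A straightforward computation gives
\[
f'(z) = \frac{p (\ln z)^{p-1}}{z}, \qquad f''(z) = \frac{p (\ln z)^{p-2}}{z^{2}} \Bigl[ (p-1) - \ln z \Bigr],
\]
where, for non-integer $p$, these expressions are interpreted for $z > 1$ so that $\ln z > 0$ and the fractional powers are real and non-negative. The prefactor $p (\ln z)^{p-2}/z^{2}$ is non-negative on $z > 1$ (using $p \geq 1$, which is the case of interest in view of its use with Lemma \ref{lemma2}), so the sign of $f''(z)$ is controlled entirely by the bracketed term $(p-1) - \ln z$.

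Next I would solve the inequality $(p-1) - \ln z \leq 0$, which is equivalent to $z \geq e^{p-1}$. Setting $A := e^{1-p}$, we have $A > 0$ and $1/A = e^{p-1}$, and by construction $f''(z) \leq 0$ for every $z > 1/A$. This establishes concavity of $(\ln z)^{p}$ on $(1/A, \infty)$, and $A$ depends only on $p$, as required.

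I do not expect any genuine obstacle here: the entire proof reduces to a one-line differentiation and a sign analysis. The only subtlety is notational, ensuring that $(\ln z)^{p-2}$ is treated as a positive real quantity, which is automatic on the range $z > 1 \geq 1/A$ where we will apply the conclusion (and this is consistent with how Lemma \ref{lemma3} will be used in combination with Lemma \ref{lemma2} and Jensen's inequality in the subsequent argument).
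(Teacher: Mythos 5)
Your proof is correct and follows exactly the paper's argument: compute $f''(z)=\frac{p(\ln z)^{p-2}}{z^{2}}\left[(p-1)-\ln z\right]$, observe the sign is governed by $(p-1)-\ln z$, and take $A=e^{1-p}$. The only difference is that you spell out the (harmless) positivity of the prefactor for non-integer $p$ on $z>1$, which the paper leaves implicit.
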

\begin{proof}
A straightforward computation implies
$$\left[\left(\ln z\right)^p\right]''=\frac{p\left(\ln z\right)^{p-2}}{z^2}\left[p-1-\ln z\right].$$ For $z>e^{p-1}$, we have
that$$\left[\left(\ln z\right)^p\right]''<0.$$ Therefore, the result holds for $A=e^{1-p}$.
\end{proof}
\begin{Lemma}\label{lemma4}
Let $(u^\ep,m^\ep)$ be a solution of \eqref{smfg}. Then, 
$$\int_{m+\ep\leq 1}\left(\ln\frac{1}{m(x,\tau)+\ep}\right)^pdx\leq C+C\left\|\left|D_pH\right|^2\right\|_{L^\infty(\Tt^d\times[0,T])}^p.$$
\end{Lemma}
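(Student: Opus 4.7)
The strategy is to combine Lemma \ref{lemma1} with an application of Jensen's inequality that exploits the concavity of $(\ln z)^p$ supplied by Lemma \ref{lemma3}. First, integrating the differential inequality of Lemma \ref{lemma1} from $0$ to $\tau$, and using that $m_0>0$ is smooth so that $\ln\int_{\Tt^d}1/(m_0+\ep)\,dx$ is bounded uniformly in $\ep$, I would obtain
$$\ln F(\tau)\leq C+C\left\||D_pH|^2\right\|_{L^\infty(\Tt^d\times[0,T])},\qquad F(\tau):=\int_{\Tt^d}\frac{1}{m(x,\tau)+\ep}\,dx.$$

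Next, with $A$ as in Lemma \ref{lemma3}, so that $(\ln z)^p$ is concave on $\{z>1/A\}$ and in particular $A<1$ (since $p>1$ forces $A=e^{1-p}<1$), I would split $\{m+\ep\leq 1\}=E\cup\{A\leq m+\ep\leq 1\}$, where $E:=\{m+\ep<A\}$. On the second set the integrand is dominated by $(\ln(1/A))^p$, contributing only a constant that depends on $p$. On $E$ the map $z(x):=1/(m(x,\tau)+\ep)$ takes values in $(1/A,\infty)$, so Jensen's inequality for the concave function $(\ln z)^p$ against the probability measure $|E|^{-1}\mathbf{1}_E\,dx$ gives
$$\int_E\left(\ln\frac{1}{m+\ep}\right)^p dx\leq |E|\left(\ln\left(\frac{1}{|E|}\int_E\frac{1}{m+\ep}\,dx\right)\right)^p\leq |E|\left(\ln\frac{F(\tau)}{|E|}\right)^p.$$

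To finish, I would expand $\ln(F(\tau)/|E|)=\ln F(\tau)+\ln(1/|E|)$, insert the first-step bound for $\ln F(\tau)$, and invoke the elementary inequality $(a+b)^p\leq 2^{p-1}(a^p+b^p)$. Using $|E|\leq 1$, the first resulting term is controlled by $C+C\||D_pH|^2\|_{L^\infty}^p$, while the second term is controlled by the fact that $s\mapsto s(\ln(1/s))^p$ is bounded on $(0,1]$. The main obstacle is that concavity of $(\ln z)^p$ only holds on $\{z>1/A\}$, which is what forces the splitting of the domain into $E$ and its complement within $\{m+\ep\leq 1\}$; a secondary subtlety is the control of $\ln(1/|E|)$ when $|E|$ degenerates, which is handled by the boundedness of $s(\ln(1/s))^p$ near zero.
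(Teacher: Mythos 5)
Your proof is correct and follows essentially the same route as the paper: split $\{m+\ep\leq 1\}$ at the concavity threshold $A$ from Lemma \ref{lemma3}, apply Jensen's inequality on $\{m+\ep<A\}$, expand $\ln(F(\tau)/|E|)$, control $|E|\bigl(\ln(1/|E|)\bigr)^p$ by an elementary bound, and invoke Lemma \ref{lemma1}. The only (harmless, arguably cleaner) difference is that you apply Jensen directly to $(\ln z)^p$ on the set where all values of $1/(m+\ep)$ lie in the concavity region $(1/A,\infty)$, whereas the paper first builds a globally concave increasing extension $\Psi$ and then runs a short case analysis.
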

\begin{proof}
Clearly, 
\begin{align*}
\int_{m+\ep\leq 1}\left(\ln\frac{1}{m+\ep}\right)^pdx&=\int_{A\leq m+\ep\leq 1}\left(\ln\frac{1}{m+\ep}\right)^pdx\\&\quad+\int_{m+\ep<A}\left(\ln\frac{1}{m+\ep}\right)^pdx,
\end{align*}for every $0<A<1$.
Select $A$ as in Lemma \ref{lemma3}. 
First note that
$$\int_{A\leq m+\ep\leq 1}\left(\ln\frac{1}{m+\ep}\right)^pdx\leq C\max_{A\leq m+\ep\leq 1}\left|\ln\frac{1}{m+\ep}\right|^p\leq C.$$
Define $\Psi(z)=(\ln z)^p$, for $z>\frac 1 A$, and extend it continuously and linearly for $z<\frac 1 A$. It is
possible to do so in such a way that $\Psi$ is globally concave and increasing. 

Then Jensen's inequality leads to 
\begin{align*}
\frac{1}{\left|\left\lbrace m+\ep\leq A\right\rbrace\right|}\int_{m+\ep\leq A}\left(\ln\frac{1}{m+\ep}\right)^pdx&=
\frac{1}{\left|\left\lbrace m+\ep\leq A\right\rbrace\right|}\int_{m+\ep\leq A}\Psi\left(\frac{1}{m+\ep}\right)dx\\
&\leq \Psi\left(\frac{1}{\left|\left\lbrace m+\ep\leq A\right\rbrace\right|}\int_{m+\ep<A}\frac{1}{m+\ep}\right). 
\end{align*}
Since $\Psi$ is increasing we conclude that 
\[
\frac{1}{\left|\left\lbrace m+\ep\leq A\right\rbrace\right|}\int_{m+\ep\leq A}\left(\ln\frac{1}{m+\ep}\right)^pdx\leq \Psi\left(\frac{1}{\left|\left\lbrace m+\ep\leq A\right\rbrace\right|}\int_{\Tt^d}\frac{1}{m+\ep}\right).
\]
Now, there are two cases, either 
\[
\frac{1}{\left|\left\lbrace m+\ep\leq A\right\rbrace\right|}\int_{\Tt^d}\frac{1}{m+\ep}<\frac 1 A, 
\]
or 
\[
\frac{1}{\left|\left\lbrace m+\ep\leq A\right\rbrace\right|}\int_{\Tt^d}\frac{1}{m+\ep}>\frac 1 A.  
\]
In the former case, we have
\[
\int_{m+\ep\leq A}\left(\ln\frac{1}{m+\ep}\right)^pdx
\leq \Psi\left(\frac 1 A\right).
\]
From the latter, it follows that
\begin{align*}
\Psi\left(\frac{1}{\left|\left\lbrace m+\ep\leq A\right\rbrace\right|}\int_{\Tt^d}\frac{1}{m+\ep}\right)
=\left[\ln\left(\frac{1}{\left|\left\lbrace m+\ep\leq A\right\rbrace\right|}\right)+
\ln\left(\int_{\Tt^d}\frac{1}{m+\ep}\right)\right]^p.
\end{align*}
Therefore, since $|\{m+\epsilon\leq A\}|\leq 1$,
\begin{align*}
\int_{m+\ep\leq A}\left(\ln\frac{1}{m+\ep}\right)^pdx&\leq 
\Psi\left(\frac 1 A\right)+
C_p\left[\ln\left(\int_{\Tt^d}\frac{1}{m+\ep}\right)\right]^p\\&\quad+C_p\left|\left\lbrace m+\ep\leq A\right\rbrace\right|\left[\ln\left(\frac{1}{\left|\left\lbrace m+\ep\leq A\right\rbrace\right|}\right)\right]^p.
\end{align*}Because $$\frac{1}{\left|\left\lbrace m+\ep\leq A\right\rbrace\right|}\,\geq\,1,$$
it follows that $$\ln\left(\frac{1}{\left|\left\lbrace m+\ep\leq A\right\rbrace\right|}\right)\,\leq\,C_\delta
\left(\frac{1}{\left|\left\lbrace m+\ep\leq A\right\rbrace\right|}\right)^\delta,$$ for every $\delta>0$. By choosing $\delta=\frac{1}{p}$ one obtains
\begin{align*}
\int_{m+\ep\leq A}\left(\ln\frac{1}{m+\ep}\right)^pdx&\leq C+C\left[\ln\left(\int_{\Tt^d}\frac{1}{m+\ep}\right)\right]^p\\&\quad+C\frac{\left|\left\lbrace m+\ep\leq A\right\rbrace\right|}{\left|\left\lbrace m+\ep\leq A\right\rbrace\right|},
\end{align*}that is,
\begin{align*}
\int_{m+\ep\leq A}\left(\ln\frac{1}{m+\ep}\right)^pdx&\leq C+C\left[\ln\left(\int_{\Tt^d}\frac{1}{m+\ep}\right)\right]^p,
\end{align*}which concludes the proof, using Lemma \ref{lemma1}. 
\end{proof}

We end this Section with the proof of Proposition \ref{regfp}.

\begin{proof}[Proof of Proposition \ref{regfp}.]
By combining Lemmas \ref{lemma2} and \ref{lemma4} one obtains that $$\int_{\Tt^d}|\ln (m_\epsilon+\epsilon)|^pdx\leq C+C\left\||D_pH|^2\right\|^p_{L^\infty(\Tt^d\times[0,T])}.$$
The Proposition is then implied by A\ref{dphsq}.
\end{proof}

\section{Lipschitz regularity for the Hamilton-Jacobi equation}\label{liphj}

In this Section, we obtain estimates for $Du^\ep$ in $L^\infty(\Tt^d\times[0,T])$, uniformly in $\epsilon$. We begin with a technical lemma, followed by the proof of Proposition \ref{prop}. This Section ends with the proof of Theorem \ref{uLip}.

\begin{Lemma}\label{lem61}
For $d>2$
there exist real numbers $\lambda$, $b$, $p$, $\tilde q$, $\theta$, and $\bar \nu$
such that \eqref{techcon2}, 
\begin{equation}\label{rest6}
\frac{1}{p}+\frac{1}{\tilde{q}}=\frac{1}{2},\qquad \tilde q\geq 1 
\end{equation}
\begin{equation}\label{rest7}
\frac{1}{\tilde{q}\left(\frac{2-\bar{\nu}}{2}\right)}=1-\theta+\frac{2\theta}{2^*\bar{\nu}}, \qquad 0<\theta<1
\end{equation}
\begin{equation}\label{rest8}
\theta=\frac{\bar{\nu}}{2-\bar{\nu}},\qquad \mbox{and}\qquad 0<\bar \nu<1
\end{equation}
hold simultaneously. 
\end{Lemma}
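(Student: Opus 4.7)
The plan is to treat $\bar\nu$ as the single free parameter and derive the others from it, then verify that $\lambda$, $p$, and $b$ satisfying \eqref{techcon2} can be chosen compatibly. First, equation \eqref{rest8} forces $\theta=\bar\nu/(2-\bar\nu)$, which automatically lies in $(0,1)$ whenever $\bar\nu\in(0,1)$. Substituting this into \eqref{rest7} and simplifying (the two summands on the right become $(2-2\bar\nu)/(2-\bar\nu)$ and $(d-2)/(d(2-\bar\nu))$) yields
\begin{equation*}
\tilde q \;=\; \frac{2d}{3d-2-2d\bar\nu}.
\end{equation*}

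Next, I would analyze this expression as $\bar\nu$ varies in $(0,1)$. For $d>2$ one checks $\tilde q\geq 1$ exactly when $\bar\nu\geq (d-2)/(2d)$, and more importantly $\tilde q>2$ exactly when $\bar\nu>1-1/d$; on this range the quantity $1/p:=1/2-1/\tilde q$ from \eqref{rest6} is strictly positive, so $p$ is well-defined and a direct computation gives $p\to d$ as $\bar\nu\to 1^-$ and $p\to\infty$ as $\bar\nu\to (1-1/d)^+$. In particular, by choosing $\bar\nu\in(1-1/d,1)$ I obtain $p>d$ (and $\tilde q>2\geq 1$, as required).

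Having fixed such $\bar\nu$ (hence $\theta$, $\tilde q$, $p$), I turn to \eqref{techcon2}. Since $p>d$, the interval $(d/(2p),1)$ is nonempty (its left endpoint is less than $1/2$), so I pick any $\lambda$ inside it. This choice makes $p>d/(2\lambda)$ automatic, and moreover $2\lambda p/d>1$, so the interval $(1,2\lambda p/d)$ is nonempty and I pick $b$ in it. All the required inequalities in \eqref{techcon2}, \eqref{rest6}, \eqref{rest7}, \eqref{rest8} then hold by construction.

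The work is purely algebraic bookkeeping, so the only real obstacle is verifying that the $\bar\nu$-range on which every constraint is compatible is nonempty; this amounts to checking the asymptotic values $p\to d$ and $\tilde q\to 2^*$ as $\bar\nu\to 1^-$, which shows that the critical limit reproduces the Sobolev exponent and leaves room to satisfy $p>d/2$ strictly. Once this is observed, the existence of the tuple $(\lambda,b,p,\tilde q,\theta,\bar\nu)$ follows by choosing $\bar\nu$ sufficiently close to $1$ and then $\lambda$ close to $1$.
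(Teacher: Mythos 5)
Your proof is correct and follows essentially the same route as the paper: both reduce the system to elementary algebra and identify the same feasibility region, the only difference being that you parametrize by $\bar\nu\in(1-\tfrac{1}{d},1)$ whereas the paper solves for $\tilde q$, $\theta$, $\bar\nu$ in terms of $p$ and arrives at the equivalent condition $p>d$. Your subsequent choices of $\lambda$ and $b$ satisfying \eqref{techcon2} match the paper's argument up to the (immaterial) order in which $\lambda$ and $p$ are fixed.
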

\begin{proof}
The Lemma is established by elementary computations that are outlined next. The identities in \eqref{rest6}-\eqref{rest8}
can be solved directly for $\tilde q$, $\theta$ and $\bar \nu$. Thus, the inequalities in  \eqref{rest6}-\eqref{rest8}
are translated into
\[
\frac{2p}{p-2}\geq 1, \qquad 0<\frac{d-p+d p}{p-d+dp}<1, \qquad 0<\frac{d-p+d p}{dp}<1.
\]
Since $d>2$, all the previous inequalities hold for $p>d$.
The parameter $\lambda$ can be chosen arbitrarily satisfying $0<\lambda<1$. 
Then,
if $p$ is chosen large enough so that 
$p>\max\{\frac{d}{2\lambda}, d\}$, the existence of
$b$ satisfying \eqref{techcon2} is immediate. 
\end{proof}

\begin{proof}[Proof of Proposition \ref{prop}.]For ease of notation, we omit $\epsilon$ throughout the proof. Choose $\lambda$, $b$, $p$, $\tilde q$, $\theta$ and $\bar \nu$ as in Lemma \ref{lem61}. Let $\rho$ be a solution of \eqref{adj}.
Fix a unit vector $\xi$ and differentiate the first equation in \eqref{smfg} in the $\xi$ direction. Multiply it by $\rho$ and \eqref{adj} by $u_\xi$; by adding them and integrating by parts, it follows that 
$$u_\xi(x_0,\tau)=\int_\tau^T\int_{\Tt^d}-D_\xi H\rho+g_\xi\rho dxdt+\int_{\Tt^d}(u_T)_\xi\rho(x,T)dx.$$
Assumption A\ref{dxh} implies 
\begin{align*}
\left|\int_\tau^T\int_{\Tt^d}-D_\xi H\rho dxdt\right|&\leq C\,+\,C\int_0^T\int_{\Tt^d}H\rho dxdt\\&\leq C\,+\,C\|g_\ep\|_{L^\infty(0,T;L^p(\Tt^d))}\left(1+\|Du^\ep\|_{L^\infty(\Tt^d\times[0,T])}^{\frac{2\lambda(\gamma-1)}{b}}\right),
\end{align*}
by Corollary \ref{cor2}. Moreover, $$\left|\int_{\Tt^d}(u_T)_\xi\rho(x,T)dx\right|\leq C$$because it depends only on the terminal data. It remains to bound the term $$\int_0^T\int_{\Tt^d}g_\xi\rho dx.$$
Integrating by parts gives the following estimate:
$$\left|\int_0^T\int_{\Tt^d}g_\xi\rho dx\right|\leq C\|g\|_{L^\infty(0,T;L^p(\Tt^d))}\|\rho^{1-\frac{\bar{\nu}}{2}}\|_{L^2(0,T;L^{\tilde{q}}(\Tt^d))}\|D\rho^\frac{\bar{\nu}}{2}\|_{L^2(\Tt^d\times[0,T])},$$
where $\tilde q$ is given by \eqref{rest6}. 
From estimate \eqref{star2} in Proposition \ref{prop2}, it follows that
$$\|D\rho^\frac{\bar{\nu}}{2}\|_{L^2(\Tt^d\times[0,T])}\,\leq\, C\,+\,C\|Du\|_{L^\infty(\Tt^d\times[0,T])}^{\gamma-1}.$$ Moreover, 
$$\left(\int_{\Tt^d}\rho^{\tilde{q}\left(\frac{2-\bar{\nu}}{2}\right)}\right)^\frac{1}{\tilde{q}\left(\frac{2-\bar{\nu}}{2}\right)}\leq \left(\int_{\Tt^d}\rho\right)^{1-\theta}\left(\int_{\Tt^d}\rho^\frac{2^*\bar{\nu}}{2}dx\right)^\frac{2\theta}{2^*\bar{\nu}},$$provided \eqref{rest7} holds.
Then Sobolev's Theorem yields 
$$\left(\int_{\Tt^d}\rho^{\tilde{q}\left(\frac{2-\bar{\nu}}{2}\right)}\right)^\frac{1}{\tilde{q}\left(\frac{2-\bar{\nu}}{2}\right)}\leq C+C\left(\int_{\Tt^d}\left|D\left(\rho^\frac{\bar{\nu}}{2}\right)\right|^2\right)^\frac{\theta}{\bar{\nu}}.$$
Consequently, 
$$\left(\int_{\Tt^d}\rho^{\tilde{q}\left(\frac{2-\bar{\nu}}{2}\right)}\right)^\frac{2}{\tilde{q}}\leq C+C\left(\int_{\Tt^d}\left|D\left(\rho^\frac{\bar{\nu}}{2}\right)\right|^2\right)^\frac{(2-\bar{\nu})\theta}{\bar{\nu}}.$$By setting
$\theta$ as in \eqref{rest8}, 
and recurring to Proposition \ref{prop2} one obtains that $$\|\rho^{1-\frac{\bar{\nu}}{2}}\|_{L^2(0,T;L^{\tilde{q}}(\Tt^d))}\leq C+C\|Du\|_{L^\infty(\Tt^d\times[0,T])}^{\gamma-1}.$$

By gathering the previous computation, we conclude that
\begin{align*}
\left|u_\xi(x,\tau)\right|&\leq C+C\|g\|_{L^\infty(0,T;L^p(\Tt^d))}\left(1+\|Du\|_{L^\infty(\Tt^d\times[0,T])}^\frac{2\lambda(\gamma-1)}{b}\right)\\&\quad+C\|g\|_{L^\infty(0,T;L^p(\Tt^d))}\left(1+\|Du\|^{2(\gamma-1)}_{L^\infty(\Tt^d\times[0,T])}\right),
\end{align*}which becomes 
\begin{align*}
\left|u_\xi(x,\tau)\right|&\leq C+C\|g\|_{L^\infty(0,T;L^p(\Tt^d))}+C\|g\|_{L^\infty(0,T;L^p(\Tt^d))}\|Du\|^{2(\gamma-1)}_{L^\infty(\Tt^d\times[0,T])},
\end{align*}once we take into account that
$$\frac{2\lambda(\gamma-1)}{b}<2(\gamma-1).$$
\end{proof}

%

\begin{proof}[Proof of Theorem \eqref{main}]
We first notice that $\ln(m^\epsilon +\epsilon)$ is in $L^p$, uniformly in $\epsilon$. On the other hand, $Du^\epsilon$ is bounded in $L^\infty$, uniformly in $\epsilon$. Therefore, from standard regularity theory for the heat equation, we have $D^2u^\epsilon$ uniformly bounded in $L^p$, for every $1<p<\infty$. 

On the other hand, consider the Hopf-Cole transformation $v^\epsilon\doteq \ln(m^\epsilon+\epsilon)$. It follows from elementary computations that $$v_t^\epsilon-D_pHDv^\epsilon-\div(D_pH)=|Dv^\epsilon|^2+\Delta v,$$
which implies that $v^\epsilon$ is bounded in $L^\infty$, uniformly in $\epsilon$ (see \cite{GPM2}).
Consequently, $m^\ep$ is also bounded by below, uniformly in $\epsilon$. Once this is established, because $\ln (m+\epsilon)$ is uniformly bounded 
by below and has sub-polynomial growth, the techniques in \cite{GPM2} can be applied without any substantial change. 
Therefore, we conclude that $u^\ep$ is smooth, with norms uniformly bounded in every Sobolev space. From this, it follows that, through some subsequence, $u^\ep\to u$ in any Sobolev space. As a consequence, we obtain that $m^\ep\to m$ in the strong sense in any Sobolev space. Hence, the limit $(u,m)$ is a classical solution of \eqref{mfg}-\eqref{itbc}, and the proof is complete.
\end{proof}



\bibliography{mfg}
\bibliographystyle{plain}

\end{document}